\setlist[enumerate,1]{(a)}
\renewcommand*\env@matrix[1][*\c@MaxMatrixCols c]{%
  \hskip -\arraycolsep
  \let\@ifnextchar\new@ifnextchar
  \array{#1}}
\newtheorem{theorem}{Theorem}[section]
\newtheorem*{theorem*}{Theorem}
\newtheorem{lemma}[theorem]{Lemma}
\theoremstyle{definition}
\newtheorem{definition}[theorem]{Definition}
\theoremstyle{remark}
\newtheorem*{example*}{Example}
\theoremstyle{plain}
\newcommand{\NN}{\mathbb{N}}
\newcommand{\PP}{\mathbb{P}}
\newcommand{\cH}{\mathcal{H}}
\newcommand{\indic}{\mathds{1}}
\providecommand{\xto}{\xrightarrow}
\providecommand{\E}{\mathbb E}
\newenvironment{nalign}{
    \begin{equation}
    \begin{aligned}
}{
    \end{aligned}
    \end{equation}
    \ignorespacesafterend
}
\begin{document}

\title{Stationary distribution of node2vec random walks on household models}
\author[1]{Lars Schroeder}
\author[1]{Clara Stegehuis}
\affil[1]{University of Twente - Faculty of Electrical Engineering, Mathematics and Computer Science}

\maketitle

\begin{abstract}
The node2vec random walk has proven to be a key tool in network embedding algorithms. These random walks are tuneable, and their transition probabilities depend on the previous visited node and on the triangles containing the current and the previously visited node.
Even though these walks are widely used in practice, most mathematical properties of node2vec walks are largely unexplored, including their stationary distribution. We study the node2vec random walk on community-structured household model graphs. We prove an explicit description of the stationary distribution of node2vec walks in terms of the walk parameters. We then show that by tuning the walk parameters, the stationary distribution can interpolate between uniform, size-biased, or the simple random walk stationary distributions, demonstrating the wide range of possible walks. We further explore these effects on some specific graph settings.
\end{abstract}

\begin{keywords}
    node2vec, random walks, stationary distribution, household models
\end{keywords}

\section{Introduction}
Random walks on graphs have captivated researchers for decades. Beyond their theoretical appeal, random walks serve as a fundamental technique in developing algorithms that extract insights from network data. Examples include community detection, node ranking, dimension reduction, and sampling~\cite{brinkman2007vertex,bohlin2014community,ribeiro2010estimating}. Traditionally, many studies have focused on simple random walks, where the walker transitions to a uniformly chosen neighbor at each step. However, several more complex types of random walks have proven to be useful in algorithms related to community detection and learning on networks~\cite{grover_node2vec_2016,alon2007non}.

For example, the non-backtracking random walk, where nodes are not allowed to walk on the edge they followed in the last step, has proven fundamental to community detection on the stochastic block model~\cite{krzakala2013}. Unlike simple random walks, which induce a Markov chain on the nodes, such random walks form a second-order stochastic process, as they depend on one past state. While second-order processes are more complex to analyze, such walks mix faster~\cite{alon2007non}, and have shown better concentration properties in spectral methods~\cite{Kawamoto_2016}. 

A second, more recently proposed second-order random walk is the node2vec random walk~\cite{grover_node2vec_2016}, which contains one parameter that controls the non-backtracking properties of the walker, and a second parameter that controls the likelihood of exploring nodes further from the current node as compared to staying in a close neighborhood. This random walk has been a key ingredient in learning network embeddings~\cite{grover_node2vec_2016}, and has been applied in many problems of wide ranges, from finding pathways in biological networks~\cite{kim2018relation} to detecting urban functional regions~\cite{cai2022discovery} or representing the content of scientific papers~\cite{KAZEMI2020101794}. 

Although experimental results have shown promising results on the application of node2vec random walks, much less is known about its theoretical behavior. In particular, results about its stationary distribution only contain results where the second parameter (controlling local vs global exploration) does not play a role \cite{meng_analysis_2020,barot_community_2021}, and the random walk essentially reduces to a mixture between a simple random walk and a non-backtracking random walk, which has the same stationary distribution as a standard random walk. However, the main reason behind the success of node2vec in practice is the tuneability of local and global exploration. The effect of this parameter on the concentration of the stationary distribution is completely unknown. 

In this paper, we therefore analyse the stationary distribution of the node2vec random walk with general parameters on a popular graph model that contains communities: the household model~\cite{Ball_Sirl_2012}.

We provide an explicit formula for the stationary distribution on this model and show it strongly depends on the chosen parameters of the random walks through analysis of specific cases. While there are parameter regimes of the node2vec random walks in which the stationary distribution is close to the one of a simple random walk, it is possible to tune the parameters such that larger and more strongly connected communities are favored or disfavored.

\subsection{Notation}
Through the rest of this paper, we will assume that $G=(V,E)$ is a finite graph with vertex set $V$ and edge set $E$. We will write $E$ for the undirected edge set and $\bar{E}$ for the corresponding directed edge set, i.e. the set where each edge in $E$ gets replaced by two directed edges that have opposite directions.
A graph is connected if there exists a path between $u$ and $v$ for all $u,v \in V$ and it is aperiodic if the greatest common divisor of the length of its cycles is one. We call a graph simple if it does not contain multi-edges and self-loops. 
Further, we say that $X_n = o_{\PP}(1)$ for a sequence of random variables $X_1,X_2,\ldots$ if $\lim_{n \to \infty} \PP\left(\left|{X_n}\right| \geq \varepsilon\right) = 0$ for all $\varepsilon > 0$. Finally, we denote convergence in probability by $X_n\xto[]{\PP}Y$ and almost sure convergence by $X_n\xto[]{a.s.}Y$ of the sequence of random variables $X_1,X_2,\ldots$ to the random variable $Y$.

\subsection{Organization of the paper}
In Section~\ref{sec:node2vecrw}, we introduce node2vec random walks and previous results. In Section~\ref{sec:mainresultsonhousehold}, we define the underlying graph models and state our main result. In Section~\ref{sec:examplesandapplications}, we study specific examples and applications to get a better intuition of the results. Section~\ref{sec:proofofmainthm} contains the proof of the main result.

\section{node2vec random walks}\label{sec:node2vecrw}
We start by introducing node2vec random walks, as introduced in~\cite{grover_node2vec_2016}. 
\begin{definition}(node2vec random walk) \label{def:node2vec}\\
    Let $\alpha,\beta,\gamma \ge 0$. A random walk $(X_i)_{i\in \NN}$ with state space $V$ is called a node2vec random walk if for all $\{v,w\} \in E$
    \begin{align}\label{eq:transprobn2v}
        \PP(X_{i+1}=w|X_{i}=v,X_{i-1}=u) \propto \begin{cases}
		\alpha ~\text{ if } u=w,\\
        \beta ~\text{ if } \{u,w\}\in E~\text{ and } u\neq w,\\
        \gamma ~\text{ if } \{u,w\}\notin E ~\text{ and } u\neq w,\\
	  0 ~\text{ otherwise},
	\end{cases}
    \end{align}
    for some $\alpha,\beta,\gamma\geq 0$.
\end{definition}

We initialize the node2vec random walk by uniformly choosing a directed edge in $\bar{E}$. 
Our goal is to study the stationary distribution of node2vec random walks. However, the transition probabilities depend on the current and the previous position (see Figure~\ref{fig:node2vecdef}), and therefore the node2vec random walk is not a Markov chain on $V$. However, if we change the state space to the space of the directed edges $\bar{E}$, then the transition probabilities only depend on the current directed edge $(u,v)$ instead of the previous and current vertex. Therefore, the node2vec random walk forms a Markov chain on $\bar{E}$, making it a so-called second-order random walk. Note that if $\alpha,\beta,\gamma >0$ and the graph is connected, the node2vec random walk is recurrent, both on $E$ and $\bar{E}$. If the graph is connected, simple, finite and aperiodic, the node2vec random walk on $\bar{E}$ admits a unique stationary distribution $\bar{\pi}$ that does not depend on the choice of the starting position. In this setting, $\bar{\pi}$ is a stationary distribution on the directed edges. By denoting the target node of a directed edge $e$ with $e(1)$, we can then define
\begin{equation}\label{eq:projectionstatdistredgestonodes}
    \pi(v) = \sum_{e\in\bar{E}: e(1)=v}\bar{\pi}(e),
\end{equation}
which is the sum of all the stationary probabilities on edges that lead to vertex $v$. We call these projected probabilities the stationary distribution of the node2vec random walk. More details on this approach can be found in~\cite{meng_analysis_2020}. Throughout the paper, we will typically write $\pi(v)$ for the (possibly projected) stationary probability of a random walk of a node $v$, while we refer to the stationary probabilities on the edges by $\bar{\pi}$. When a possible confusion could occur, we specify the random walk with an index, e.g. $\pi^{SRW}$ for the simple random walk. 

\begin{figure}[tbp]
    \centering
    \includegraphics[width=0.4\linewidth]{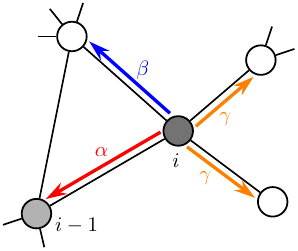}
    \caption{Illustration of the transition rates of a node2vec random walk.}
    \alt{Illustration of the transition rates of a node2vec random walk.}
    \label{fig:node2vecdef}
\end{figure}

The parameters $\alpha,\beta$ and $\gamma$ play a crucial role in the behavior of the random walk. Increasing $\alpha$ leads to more backtracking, while small values of $\alpha$ make it unlikely that a walker will revisit the previously visited edge. Increasing $\beta$ increases the tendency for the random walker to stay in a neighborhood with many triangles, such as cliques. On the other hand, increasing $\gamma$ makes it more likely that the next chosen vertex is not connected to the previously visited vertex. Intuitively, this makes it easier for the walker to explore vertices that are further in graph distance from the starting node. 
Therefore, intuitively, a node2vec random walk with large $\beta$ is more similar to a breadth-first search while with large $\gamma$, the walk is more similar to a depth-first search. Special cases of the node2vec random walk include the simple random walk: $\alpha = \beta = \gamma$ and the non-backtracking random walk: $\alpha = 0, \beta = \gamma$. Apart from these special cases and the case $\alpha >0, \beta = \gamma$, theoretical properties of node2vec random walks have not been studied. In particular, the stationary distribution for general parameters is not known in closed form.

In the special case $\alpha > 0,\beta = \gamma$, the stationary distribution of the node2vec random walk coincides with the stationary distribution of the simple random walk~\cite{meng_analysis_2020}. However, this case only explores the non-backtracking parameter of the node2vec random walks, and the parameters $\beta$ and $\gamma$ that tune the local and global exploration of the graph remain unchanged. 
This result can be formalized as
\begin{theorem}[Theorem 3.1. of~\cite{meng_analysis_2020}]\label{thm:statdistr-slightbacktracking}
    Let $G$ be a connected, finite, simple, unweighted, undirected, aperiodic graph. Then, the stationary distribution for a node2vec random walk with $\beta = \gamma$ and $\alpha > 0$ is given by
    \begin{align}
        \pi(v) = \frac{d_v}{2|E|}
    \end{align}
     for $v\in V$.
\end{theorem}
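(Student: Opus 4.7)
The plan is to lift the walk to the Markov chain on directed edges $\bar{E}$ and verify that its stationary distribution is the \emph{uniform} distribution there, i.e., $\bar\pi(u,v)=1/(2|E|)$ for every $(u,v)\in\bar E$. This guess is natural: when $\beta=\gamma$, the normalizing constant out of the directed edge $(u,v)$ equals $Z_v:=\alpha+(d_v-1)\beta$, which depends only on the current vertex $v$, not on the previous vertex $u$ or on the identity of the candidate next vertex. Projecting the uniform edge-weight via~\eqref{eq:projectionstatdistredgestonodes} would then immediately yield $\pi(v) = d_v/(2|E|)$, so the only real work is checking $\bar\pi = \bar\pi P$.

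For the verification I would fix an arbitrary directed edge $(v,w)\in\bar E$ and compute the incoming mass
\[
\sum_{u:\,\{u,v\}\in E}\bar\pi(u,v)\,\PP(X_{i+1}=w\mid X_i=v,\,X_{i-1}=u).
\]
Among the $d_v$ in-neighbors $u$ of $v$, exactly one has $u=w$ and contributes $\alpha/Z_v$, while each of the remaining $d_v-1$ in-neighbors contributes $\beta/Z_v$ (the hypothesis $\beta=\gamma$ is what erases the distinction between the cases $\{u,w\}\in E$ and $\{u,w\}\notin E$). The bracketed sum collapses to $(\alpha+(d_v-1)\beta)/Z_v = 1$, so the incoming mass equals $1/(2|E|) = \bar\pi(v,w)$, as required.

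To finish I would invoke uniqueness. With $G$ connected and $\alpha,\beta>0$, the lifted chain on $\bar E$ is irreducible (any directed edge can be reached from any other along a path in $G$, with positive one-step weight at every step since both backtracking and non-backtracking moves have positive probability). Aperiodicity on $\bar E$ follows from aperiodicity of $G$: the two-step return $(u,v)\to(v,u)\to(u,v)$ has positive probability, and concatenating this with an odd cycle of $G$ gives $\gcd = 1$. Hence $\bar\pi$ is the unique stationary distribution on $\bar E$, and the projection yields $\pi(v)=d_v/(2|E|)$. The main ``obstacle'' is really only spotting the uniform ansatz; it succeeds precisely because $\beta=\gamma$ collapses the local triangle dependence of the outgoing weights into the single vertex-dependent constant $Z_v$. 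For general $\beta\neq\gamma$ this telescoping breaks, which is exactly the degeneracy that the main theorem of the present paper confronts.
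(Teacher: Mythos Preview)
Your proposal is correct. The paper does not supply its own proof of this statement (it is quoted from \cite{meng_analysis_2020}), but your argument---verifying directly that the lifted transition matrix on $\bar E$ is doubly stochastic so that the uniform edge distribution is stationary, then invoking irreducibility/aperiodicity for uniqueness---is exactly the method the paper references and reuses in its proof of Theorem~\ref{thm:statdistrofystar}.
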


Inspired by this Theorem, we will prove Theorem \ref{thm:statdistrofystar} in Section~\ref{sec:proofofmainthm}. 

\section{Main results on household graphs}\label{sec:mainresultsonhousehold}

In this paper, we study the stationary distribution of the node2vec random walk on so-called household model~\cite{Ball_Sirl_2012}, a natural model for networks with community structures. The model consists of multiple communities that are densely connected inside, the households, while connections between communities are more sparse. Household models have been used in various settings, most notably in epidemic spreading models~\cite{BALL199941,BALL201563,HOUSE200829,stegehuis2016epidemic,hofstad2016,coupechoux_how_2014}. 

We now present the specific household model that we study. 
For that purpose, we start by defining automorphic communities. Recall that an automorphism $\varphi$ on a graph $H=(V,E)$ with adjacency matrix $A$ is a bijection $\varphi:V\to V$ that fulfills $A_{ij}=A_{\varphi(i)\varphi(j)}$ for all $i,j\in \{1,\ldots,\lvert V\rvert\}$. 
Two nodes $v,v'\in V$ are called automorphically equivalent if there exists an automorphism that maps one to the other, i.e. $\varphi(v)=v'$. Then, an automorphic community is defined in the following way:

\begin{definition}(Automorphic community)
    An automorphic community of size $k$ is a connected subgraph with $k$ nodes in which all pairs of nodes are automorphically equivalent.
\end{definition}

Examples of automorphic communities include cliques, rings and cycles. 
The construction of the household model begins with a given graph, the so-called universe graph $G'=(V',E')$, which models the connections between households. We then obtain the household model $G$ by replacing each node $v' \in V'$ with an automorphic community of size $d_{v'}$. More precisely,

\begin{definition}(Household model)\label{def:simplehousehold}
    Let $G'$ be a connected, finite, simple, unweighted, undirected, aperiodic graph with $\lvert V' \rvert = n$. Label the nodes in $G'$ by $(v'_1,\ldots,v'_n)$ and let $(d_1,\ldots,d_n)$ be the corresponding degree sequence. Let $H_1,\ldots,H_n$ be graphs such that $H_i$ is an automorphic community of size $d_i$ for all $i\in \{ 1,\ldots,n\}$.
    Then, the household model $G$ is created by performing the following two steps for all $i\in \{ 1,\ldots,n\}$
    \begin{itemize}
        \item Replace $v'_i \in V'$ by $H_i$.
        \item Add an edge from each node of $H_i$ to exactly one of the neighbors of $v'_i$.
    \end{itemize}
    The resulting graph is the household model $G$ with the corresponding universe graph $G'$.
\end{definition}

Figure~\ref{fig:householdtrafo} shows an example of the household model transformation. We want to study the stationary distribution on household models, so we require them to be aperiodic. There are cases in which the universe graph is aperiodic but the corresponding household model is not, but in these cases, the household model does not contain any triangles. Since the distinction between moving on triangles and non-triangles is the main feature of node2vec random walks, in the rest of this paper we assume that the household models contain at least one triangle and thus are aperiodic. We also assume that $\alpha,\beta,\gamma > 0$ to avoid issues with the aperiodicity of the random walk.

\begin{figure}[tbp]
    \centering
    \includegraphics[width=1\linewidth]{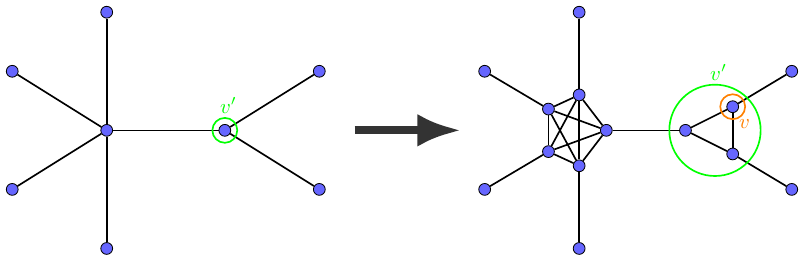}
    \caption{Example of the household model transformation. Left: the universe graph $G'$. A vertex $v'\in G'$ of degree 3 corresponds to a clique with 3 nodes $(C_3)$ in the household graph $G$ (right). In this example, $type(v')=C_3$.}
    \alt{Illustration of the household model transformation. A node of degree 3 gets replaced by a 3-clique and a node of degree 5 gets replaced by a 5-clique.}
    \label{fig:householdtrafo}
\end{figure}

To state our main theorem, we define the following notation. Let $\cH_G$ be the set of unique communities of $\{H_1,\cdots,H_n\}$ in $G$. Define $n_H^G$ as the number of communities in $G$ of type $H \in \cH_G$ and let $|H|$ be the number of vertices in community $H$. For $v'\in V'$ denote by $type(v')$ the graph $H \in \cH_G$ that describes its corresponding community.
We define the random variable $\tau^{(H)}$ as the number of jumps the node2vec random walk makes within a community of type $H \in \cH_G$ before leaving it. 
In this random variable, we include the first jump to enter $H$, so that the time spent in a community of size $1$ is always one.

We now state our main result which fully characterizes the stationary distribution of a node2vec random walk on a household model. 

\begin{theorem}\label{thm:mainthm}
    Let $\alpha,\beta, \gamma > 0$ and let $v \in V$ be in a community of type $\Tilde{H} \in \cH_G$ in the household model $G$. Then, 
    \begin{align}
        \pi(v) = \frac{\E[\tau^{(\Tilde{H})}]}{\sum\limits_{H\in \cH_G } \lvert H \rvert n_H^{G} \E[\tau^{(H)}]}.
    \end{align}
\end{theorem}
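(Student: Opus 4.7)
The plan is to split the analysis into two scales: a community-level chain of external-edge traversals, and the within-community sojourn dynamics. Let $\bar{E}_{ext}$ denote the set of external directed edges of $G$ (directed edges whose endpoints lie in different communities), so $|\bar{E}_{ext}|=2|E'|$. Define $Y_k$ to be the $k$-th external directed edge the walker traverses; by the strong Markov property of the node2vec walk on $\bar{E}$, $(Y_k)$ is a Markov chain on $\bar{E}_{ext}$. The structural core of the proof is the claim that the stationary distribution of $Y$ is uniform on $\bar{E}_{ext}$.

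To establish this, fix a community $H$ and introduce its entry-to-exit kernel $M^{(H)}(v,v') = \PP(\text{walker exits } H \text{ at } v' \mid \text{walker just entered } H \text{ at } v)$ for $v,v'\in H$. The household construction places each external neighbor of $H$ adjacent only to its unique attachment vertex in $H$, so the node2vec transitions seen during a sojourn in $H$ depend only on the internal adjacency of $H$ and on $\alpha,\beta,\gamma$. Hence any $\varphi\in\mathrm{Aut}(H)$ lifts to a distribution-preserving relabeling of sojourn trajectories, giving $M^{(H)}(\varphi(v),\varphi(v')) = M^{(H)}(v,v')$. Since $H$ is an automorphic community, $\mathrm{Aut}(H)$ acts transitively on its vertices, so $\sum_v M^{(H)}(v,v')$ is independent of $v'$; combined with the row-sum condition $\sum_{v'} M^{(H)}(v,v')=1$ this forces every column sum to equal $1$, i.e., $M^{(H)}$ is doubly stochastic. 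A direct balance-equation check on $Y$ then shows that the uniform law on $\bar{E}_{ext}$ is invariant for $Y$, and by ergodicity of the node2vec chain (which transfers to $Y$) it is its unique stationary distribution.

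With this in hand, the theorem follows from a renewal identity. Uniform stationarity of $Y$ means each $v\in H_i$ is entered externally at rate $1/(2|E'|)$. The same automorphism-of-$H$ argument applied to sojourn length (rather than exit vertex) shows that the distribution of $\tau^{(H)}$ depends only on $H$, so $\E[\tau^{(H)}]$ is well-defined. The long-run fraction of time the walker spends inside $H_i$ is therefore proportional to $|H_i|\E[\tau^{(\text{type}(H_i))}]$. Applying the automorphism argument once more to expected visit counts to a vertex of $H_i$ during a sojourn, averaged over the uniform entry distribution, shows that these counts are equal across the $|H_i|$ vertices of $H_i$, so the fraction of time at any fixed $v\in H_i$ equals the fraction of time at $H_i$ divided by $|H_i|$. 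Rewriting the resulting denominator by grouping communities according to their type yields $\pi(v) = \E[\tau^{(\tilde H)}] / \sum_{H\in\cH_G} |H|\, n_H^G\, \E[\tau^{(H)}]$.

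The main obstacle is carefully lifting an automorphism of $H$ to the sojourn dynamics inside the ambient graph $G$. In general $\mathrm{Aut}(H)$ does not extend to an automorphism of $G$; what is invariant is the narrower object of interest, namely the joint adjacency information visible to the walker during a sojourn in $H$. The fact that each external neighbor attaches to a unique internal vertex of $H$ is precisely what makes this local invariance hold, and this single observation powers both the doubly stochastic property of $M^{(H)}$ and the equal-split-of-time conclusion.
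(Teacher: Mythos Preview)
Your proposal is correct and follows essentially the same route as the paper: both arguments reduce to showing that the induced chain on external directed edges has uniform stationary law (the paper's Theorem~\ref{thm:statdistrofystar}) by establishing bistochasticity, and then combine this with the automorphic symmetry of each community and a renewal/ergodic-average argument. The main packaging differences are that the paper organises the bistochasticity computation through \emph{rooted automorphic classes} where you invoke transitivity of $\mathrm{Aut}(H)$ directly on the entry-to-exit kernel $M^{(H)}$, and the paper makes the renewal step explicit via the decomposition $t_v/t = (T_{v'}/T)(T/t)(t_v/t_{v'})(t_{v'}/T_{v'})$ together with a law of large numbers for randomly indexed sums, whereas you summarise this as a renewal identity; your version is slightly more streamlined, the paper's slightly more detailed on the limiting step.
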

Thus, the time that the node2vec random walk spends in each community of type $H$ is dictated by the random variable $\tau^{(H)}$, whose behavior is strongly affected by the parameters of the random walk. We will show in Section~\ref{sec:examplesandapplications} that this random variable can be computed explicitly in some natural cases, so that we can derive the explicit influence of $\alpha, \beta$ and $\gamma$. Section~\ref{sec:proofofmainthm} contains the proof of the theorem.

\paragraph{Relation to previous work.}
As mentioned previously, most results on node2vec random walks only apply to simple parameter settings, such as Theorem~\ref{thm:statdistr-slightbacktracking}. In contrast, Theorem~\ref{thm:mainthm} applies to all parameter settings. In addition to that, \cite{meng_analysis_2020} studies the relaxation time and the coalescence time of node2vec random walks with general parameters on specific small networks numerically, suggesting that these times are usually smallest when $\alpha$ and $\beta$ are small.

Other analytical analyses of node2vec include \cite{barot_community_2021}, which uses node2vec random walks to perform community detection on a stochastic block model. Although the authors show that node2vec outperforms the simple random walk using community detection, the authors again restrict to the setting $\alpha > 0,\beta = \gamma$, in which only the non-backtracking parameter $\alpha$ plays a role and where there is no distinction between $\beta$ and $\gamma$. However, specifically in models with a community structure and therefore many cliques, $\beta$ and $\gamma$ are expected to affect the stationary distribution significantly.

Apart from papers on node2vec random walks specifically, there are results on general second-order random walks, which include node2vec random walks. In \cite{fasino_hitting_2023} various results on hitting and return times are presented. The authors also link the second-order mean return time to the stationary distribution but as an example they only provide an explicit formula for random walks whose stationary distribution coincides with the stationary distribution of the simple random walk.

Overall, only few papers contain mathematical results on node2vec random walks and the node2vec algorithm, which is surprising given the popularity of the node2vec algorithm in computer science. Specifically, the contribution of the essential triangle walk parameter $\beta$ has been virtually always overlooked until now.

\section{Examples and Applications}\label{sec:examplesandapplications}
We now present several examples of specific cases of Theorem~\ref{thm:mainthm}, where we can compute $\mathbb{E}[\tau^{(H)}]$ explicitly to investigate the influence of the parameters $\alpha,\beta,\gamma$. 

\subsection{Average time in cliques}
The most natural example of community types are cliques, as in the original household model~\cite{Ball_Sirl_2012}. 
A $k$-clique is a complete subgraph of $V$ on $k$ vertices and a maximal clique $C_k$ is a $k$-clique that is not a subset of a bigger clique. This means that for example two vertices in a maximal $3$-clique form a $2$-clique but not a maximal $2$-clique. In the following, we will only consider maximal cliques. The next theorem provides the expected time spent in a $k$-clique which, combined with Theorem~\ref{thm:mainthm}, yields an explicit stationary probability for node2vec random walks on clique household models.

\begin{theorem}\label{thm:exptime-cliques} Let $H_1,\dots, H_n = C_{k_1},\dots C_{k_n}$, where $k_i = |H_i|$. For all $k \in \NN$ it holds
    \begin{align}\label{eq:evaluecliques}
        \E[\tau^{(C_k)}] = \frac{\alpha+(k-1)(\alpha+(k-2)\beta+2\gamma)}{\alpha+(k-1)\gamma}.
    \end{align}
\end{theorem}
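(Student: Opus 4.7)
\medskip
\noindent\emph{Proof plan.} The plan is to track the node2vec walker while it sits in a given clique $C_k$ using two phase states that summarize what matters for the next transition. Let state $A$ denote that the walker just entered the clique, so that the previously visited vertex $u$ lies outside $C_k$, and let state $B$ denote that the walker is in the clique with its previously visited vertex $w'$ also inside the clique. By definition of $\tau^{(C_k)}$, the walker starts in state $A$ contributing $1$ to the count, each subsequent internal-to-internal jump contributes $1$, and the eventual jump out of $C_k$ contributes $0$.

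The first step is to extract the transition weights from Definition~\ref{def:node2vec} using the structure of the household model: every node of $H_i$ has exactly one external edge, and these external edges go to distinct neighbors of $v'_i$ in $G'$. If the walker is in state $A$ at $v$ with external previous $u$, the other $k-1$ clique-neighbors of $v$ are not adjacent to $u$, so each such transition has weight $\gamma$, and backtracking to $u$ has weight $\alpha$, giving normalizing constant $\alpha+(k-1)\gamma$. If the walker is in state $B$ at $v$ with internal previous $w'$, the remaining $k-2$ clique-neighbors of $v$ are adjacent to $w'$ (weight $\beta$ each), backtracking to $w'$ has weight $\alpha$, and the single external neighbor of $v$ is not adjacent to $w'$ (weight $\gamma$), giving normalizing constant $\alpha+(k-2)\beta+\gamma$.

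The decisive simplification is that from state $B$, every move that stays in the clique leaves the walker in state $B$ again at the new vertex, and only the jump to the external neighbor terminates the sojourn. Consequently, the number of internal-to-internal jumps made from state $B$ is geometric with success probability $\gamma/(\alpha+(k-2)\beta+\gamma)$ and hence mean $(\alpha+(k-2)\beta)/\gamma$. Conditioning on the first choice out of state $A$, namely leaving immediately with probability $\alpha/(\alpha+(k-1)\gamma)$ or transitioning to state $B$ with the complementary probability while adding $1$ to $\tau^{(C_k)}$, yields \eqref{eq:evaluecliques} after a short algebraic rearrangement.

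The main technical point to be careful about is the adjacency bookkeeping inside the household model, specifically that the external neighbor of the current clique vertex is not adjacent to the previous internal vertex, so that its edge really carries weight $\gamma$ rather than $\beta$. This uses the simplicity of $G'$ together with the one-to-one assignment of external edges from $H_i$ to neighbors of $v'_i$. Once this is verified, everything else reduces to a single geometric-distribution expectation plus elementary conditioning, and the boundary cases $k=1$ (where state $B$ is never reached) and $k=2$ (where the $\beta$ term vanishes) fit the same formula without separate treatment.
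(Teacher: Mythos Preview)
Your proposal is correct and follows essentially the same approach as the paper: both identify the same two regimes (the entry step with normalizer $\alpha+(k-1)\gamma$, and subsequent internal steps with normalizer $\alpha+(k-2)\beta+\gamma$) and exploit the resulting geometric structure. The only difference is cosmetic: the paper writes out the full distribution $\PP(\tau^{(C_k)}=l)$ and sums the series $\sum_{l\ge 1} l\,\PP(\tau^{(C_k)}=l)$ explicitly, whereas you compute the expectation directly by conditioning on the first move and invoking the mean of a geometric random variable, which is slightly more economical but not a genuinely different route.
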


\begin{proof}
    First, we calculate the probability of remaining within the $k$-clique for a specific number of steps. The minimum number of steps in a clique is 1, which occurs if the random walk exits the clique immediately after entry. Upon the initial jump to a clique node, there are two possible actions. According to the transition rates defined in node2vec random walks, as given in~\eqref{eq:transprobn2v}, we may either return to the originating node with a probability proportional to $\alpha$, or select one of the remaining $k-1$ nodes in the clique, each with a probability proportional to $\gamma$. For the random walk to exit the clique immediately after entry, it must return to the originating node. Consequently, this results in:
    \begin{align}\label{eq:cliquestayonestep}
        \PP(\tau^{(C_k)}=1) = \frac{\alpha}{\alpha+(k-1)\gamma}.
    \end{align}
    To remain in the clique for exactly two steps, the random walk must take one step within the clique after entering and then exit in the subsequent step. This involves selecting one of the other $k-1$ nodes in the clique for the internal step, followed by a jump to a neighboring node outside the clique. Consequently, we obtain
    \begin{align}
        \PP(\tau^{(C_k)}=2) = \frac{(k-1)\gamma}{\alpha+(k-1)\gamma} \cdot \frac{\gamma}{\alpha+(k-2)\beta+\gamma}.
    \end{align}
    To remain for exactly three steps, we need to add an extra step inside the clique. The probability to revisit the previous node is proportional to $\alpha$ and the probability to go to one of the other $k-2$ nodes is proportional to $\beta$, so that we obtain
    \begin{align}
        \PP(\tau^{(C_k)}=3) &= \frac{(k-1)\gamma}{\alpha+(k-1)\gamma} \cdot \frac{\alpha+(k-2)\beta}{\alpha+(k-2)\beta+\gamma} \cdot \frac{\gamma}{\alpha+(k-2)\beta+\gamma}.
    \end{align}
    Higher amounts of steps follow similarly by adding more steps inside the clique. Because of the symmetry of cliques, for $l\ge 2$, we obtain 
    \begin{align}  
        \PP(\tau^{(C_k)}=l) = \frac{(k-1)\gamma}{\alpha+(k-1)\gamma} \cdot \left(\frac{\alpha+(k-2)\beta}{\alpha+(k-2)\beta+\gamma}\right)^{l-2} \cdot \frac{\gamma}{\alpha+(k-2)\beta+\gamma}.
    \end{align}
    We now want to calculate the expected value
    \begin{align}
        \E[\tau^{(C_k)}] = \sum\limits_{l=1}^{\infty} l \cdot \PP(\tau^{(C_k)}=l).
    \end{align}
    For that we define
    \begin{align}
        q \coloneqq \frac{\alpha+(k-2)\beta}{\alpha+(k-2)\beta+\gamma}.
    \end{align}
    We prepare the calculation of the expected value by computing 
    \begin{align}
        \sum\limits_{l=2}^{\infty} l \cdot \left(\frac{\alpha+(k-2)\beta}{\alpha+(k-2)\beta+\gamma}\right)^{l-2} = \sum\limits_{l=2}^{\infty} l \cdot q^{l-2} = \sum\limits_{l=1}^{\infty} (l+1)\cdot q^{l-1}.
    \end{align}
    Using the geometric series we obtain that
    \begin{nalign}
        \sum\limits_{l=2}^{\infty} l \cdot q^{l-2} &= \frac{1}{(1-q)^2} + \frac{1}{1-q} = \frac{(\alpha+(k-2)\beta+\gamma)^2}{\gamma^2} + \frac{\alpha+(k-2)\beta+\gamma}{\gamma}\\
        &=\frac{(\alpha+(k-2)\beta+\gamma)^2+\gamma (\alpha+(k-2)\beta+\gamma)}{\gamma^2}\\
        &=(\alpha+(k-2)\beta+\gamma)\frac{\alpha+(k-2)\beta+2\gamma}{\gamma^2}.
    \end{nalign}
    Next, we compute
    \begin{nalign}
        \sum\limits_{l=2}^{\infty} l \cdot\PP(\tau^{(C_k)}=l) 
        &= \frac{(k-1)\gamma}{\alpha+(k-1)\gamma}  \frac{\gamma}{\alpha+(k-2)\beta+\gamma} \sum\limits_{l=2}^{\infty} l \cdot q^{l-2}\\
        &= \frac{(k-1)\gamma^2}{\alpha+(k-1)\gamma}  \frac{\alpha+(k-2)\beta+\gamma}{\alpha+(k-2)\beta+\gamma} \frac{ \alpha+(k-2)\beta+2\gamma}{\gamma^2} \\
        &= \frac{(k-1)(\alpha+(k-2)\beta+2\gamma)}{\alpha+(k-1)\gamma} .
    \end{nalign}
    Adding the summand~\eqref{eq:cliquestayonestep} yields the expected value
    \begin{nalign}
        \E[\tau^{(C_k)}] &= \sum\limits_{l=1}^{\infty} l \cdot \PP(\tau^{(C_k)}=l) = \frac{\alpha}{\alpha+(k-1)\gamma} +\sum\limits_{l=2}^{\infty} l \cdot \PP(\tau^{(C_k)}=l) \\
        &= \frac{\alpha}{\alpha+(k-1)\gamma} + \frac{(k-1)(\alpha+(k-2)\beta+2\gamma)}{\alpha+(k-1)\gamma}\\
        &= \frac{\alpha + (k-1)(\alpha+(k-2)\beta+2\gamma)}{\alpha+(k-1)\gamma}.
    \end{nalign}
\end{proof}

\subsection{Average time in ring communities}

We now turn to ring communities, as illustrated in Figure~\ref{fig:ringcommunity}.
\begin{figure}[tbp]
    \centering
    \includegraphics[width=0.4\linewidth]{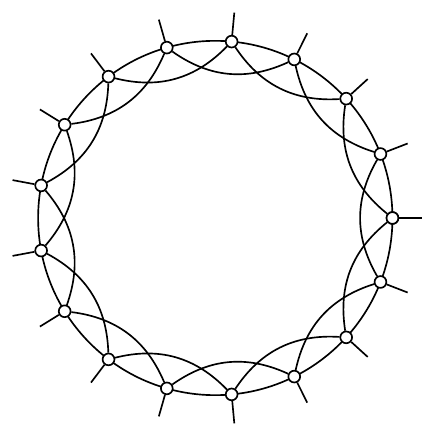}
    \caption{Illustration of a ring community of size $k=18$.}
    \alt{Illustration of a ring community of size 18. The nodes are depicted in a ring and every node is connected to its two nearest neighbors and has one outside arm.}
    \label{fig:ringcommunity}
\end{figure}
In these communities, the $k$ vertices are aligned in a circle and each vertex is connected to its four nearest neighbors.
Such ring communities have received specific attention in~\cite{meng_analysis_2020}, which studied the relaxation times of node2vec random walks with different parameter choices on ring communities.

For community sizes $k \le 5$, ring communities coincide with $k$-cliques. We therefore now focus on consider ring communities of size $k \ge 6$. For ring communities of size $k \ge 7$, the transition probabilities depend on whether the previous step was to a neighbor on the ring or whether it was a long step through the inside of the ring. Indeed, after a step to a neighbor on the ring, the previous and the current node are part of two triangles. On the other hand, after a long step on the inside, the previously visited node and the current node are only part of one triangle, and therefore the transition probabilities are different. For $k=6$, the ring is small, and the previously visited and the current node are always part of two triangles, so that the above phenomenon does not occur in this case. 

Since the node degrees do not depend on $k$, the stationary probabilities for all nodes in ring communities of size $k \ge 7$ are equal in the entire household model. For example, a node in a ring community of size seven and a node in a ring community of size 13 have the same stationary probability. Denote a ring community of size $k$ by $R_k$. We now compute $\mathbb{E}[\tau^{(R_k)}]$. Since the calculations are of the same type as for the cliques, we omit most details.

\underline{$k = 6$} \\
Using Definition~\ref{def:node2vec} we calculate the following probabilities to stay a certain number of steps in the community.
\begin{nalign}
    \PP(\tau^{(R_6)} = 1) =&~ \frac{\alpha}{\alpha + 4\gamma} \\
    \PP(\tau^{(R_6)} = 2) =&~ \frac{4\gamma}{\alpha+4\gamma} \cdot \frac{\gamma}{\alpha + 2\beta + 2\gamma} \\
    \PP(\tau^{(R_6)} = 3) =&~ \frac{4\gamma}{\alpha+4\gamma} \cdot \frac{\alpha + 2\beta + \gamma}{\alpha +2\beta +2\gamma} \cdot \frac{\gamma}{\alpha + 2\beta + 2\gamma} 
\end{nalign}
Because of the symmetry of this ring, for $l\ge 2$ we derive
\begin{align}  
    \PP(\tau^{(R_6)}=l) = \frac{4\gamma}{\alpha+4\gamma} \cdot \left(\frac{\alpha + 2\beta + \gamma}{\alpha +2\beta +2\gamma}\right)^{l-2} \cdot \frac{\gamma}{\alpha + 2\beta + 2\gamma}
\end{align}
which leads to the expected value
\begin{align}
    \E[\tau^{(R_6)}] = \frac{5\alpha+8\beta+12\gamma}{\alpha+4\gamma}.
\end{align}

\underline{$k \ge 7$} \\
As mentioned above, the transition probabilities for $k \ge 7$ depend on the type of the previous step. After a short step on the outside of the ring, the previous and current node are part of two triangles and the probability of leaving the community after a short step $p_s$ is given by
\begin{align}
    p_s &\coloneqq \frac{\gamma}{\alpha + 2\beta + 2\gamma}
\end{align}
and after long step inside the ring, the previous and current node are part of only one triangle and the probability of leaving the community after a long step $p_l$ is given by
\begin{align}    
    p_l &\coloneqq \frac{\gamma}{\alpha + \beta + 3\gamma}.
\end{align}
The probability to leave after exactly one step is the same as for $k=6$ with 
\begin{align}
    \PP(\tau^{(R_k)} = 1) = \frac{\alpha}{\alpha + 4\gamma}.
\end{align}
To stay exactly two steps in the community, we need to differentiate based on whether the step inside the community is short or long. After entering the community, the options for the next step are two short steps, two long steps, or leaving the community. 
This means, either we choose a short step with probability $\frac{2\gamma}{\alpha+4\gamma}$ and then leave with probability $p_s$ or a long step with probability $\frac{2\gamma}{\alpha+4\gamma}$ and then leave with probability $p_l$.
We obtain
\begin{align}
    \PP(\tau^{(R_k)} = 2) &= \frac{2\gamma}{\alpha+4\gamma}p_s + \frac{2\gamma}{\alpha+4\gamma} p_l\nonumber\\
    &= \frac{2\gamma}{\alpha+4\gamma}\frac{\gamma}{\alpha + 2\beta + 2\gamma} + \frac{2\gamma}{\alpha+4\gamma} \frac{\gamma}{\alpha + \beta + 3\gamma}.
\end{align}
For larger step values we need to consider all possible combinations of short and long steps that were made inside the community. For $k=3$, we have exactly two steps inside the community, leading to the combinations short-short, short-long, long-short and long-long, or more formally
\begin{nalign}
    \PP(\tau^{(R_k)} = 3) =&~ \frac{2\gamma}{\alpha+4\gamma} \frac{\alpha +\beta}{\alpha+2\beta+2\gamma} \frac{\gamma}{\alpha + 2\beta + 2\gamma} + \frac{2\gamma}{\alpha+4\gamma} \frac{\beta+\gamma}{\alpha+2\beta+2\gamma} \frac{\gamma}{\alpha + \beta + 3\gamma} \nonumber\\
    &+\frac{2\gamma}{\alpha+4\gamma} \frac{\beta+\gamma}{\alpha+\beta+3\gamma} \frac{\gamma}{\alpha + 2\beta + 2\gamma} + \frac{2\gamma}{\alpha+4\gamma} \frac{\alpha+\gamma}{\alpha+\beta+3\gamma} \frac{\gamma}{\alpha + \beta + 3\gamma}.
\end{nalign}
Formalizing this for $m$ steps, we obtain
\begin{align}
    \PP(\tau^{(R_k)} = m) = \frac{2\gamma}{\alpha+4\gamma} \sum\limits_{(x_1,\ldots,x_{m-1})\in \{ s,l \}^{m-1}}\left( \prod\limits_{j=1}^{m-2} q_{x_j,x_{j+1}} \right) p_{x_{m-1}}
\end{align}
where
\begin{align}
    q_{i,j} \coloneqq 
    \begin{cases}
        \frac{\alpha +\beta}{\alpha+2\beta+2\gamma}, &\text{ if } i=j=s,\\
        \frac{\beta+\gamma}{\alpha+2\beta+2\gamma}, &\text{ if } i=s,j=l,\\
        \frac{\beta+\gamma}{\alpha+\beta+3\gamma}, &\text{ if } i=l,j=s,\\
        \frac{\alpha+\gamma}{\alpha+\beta+3\gamma}, &\text{ if } i=j=l.
    \end{cases}
\end{align}

This expression does not seem to admit a general closed form that would be helpful for calculating the expectation. This shows that it is not easy to obtain an explicit expression of the expected time spent in a community (and with that the stationary distribution), even for simple community types like ring communities.

\subsection{Clique household model with Poissonian clique sizes}\label{subsec:Clique household model with Poissonian clique sizes}

In this section, we test the effects of the parameters on the stationary distribution on a clique household model. We sampled the degrees of $100$ nodes from a $Poi(4)$-distribution and formed the universe graph $G'$ by using a configuration model \cite{bollobas_random_2008}. We created the household model $G$ according to Definition~\ref{def:simplehousehold} with cliques as community type. For clique communities, we can use Theorem~\ref{thm:mainthm} and Theorem~\ref{thm:exptime-cliques} to calculate the stationary distribution of the nodes explicitly. Note that the stationary probability of a node only depends on its degree in the household model because each node of degree $k$ is part of a maximal $k$-clique. By Theorem \ref{thm:mainthm}, all nodes within a clique have the same projected stationary probability. 

Figure~\ref{fig:plotscliquespoissondegreesbeta10} plots the stationary probability on the $y$-axis against the degrees of the nodes in the household model $G$ on the $x$-axis. The orange bars represent the stationary probabilities of a node2vec random walk on that network with the stated parameters $\alpha, \beta, \gamma$ while the blue bars show the stationary probabilities of the simple random walk for comparison. Since the transition probabilities are only determined by the ratio of $\alpha, \beta$ and $\gamma$, we fix $\gamma = 1$ and test for different values of $\alpha$ and $\beta$. For $\beta = \gamma$, the stationary distributions of the node2vec random walk and the simple random walk coincide (see Theorem~\ref{thm:statdistr-slightbacktracking}), so we study the cases $\beta = 0.1$ and $\beta = 10$ instead.
    \begin{figure}[tbp]
        \centering
        \begin{minipage}{0.3\textwidth}
            \centering
            \includegraphics[scale=0.55]{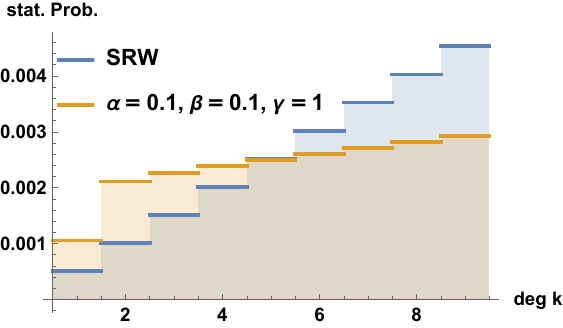}
        \end{minipage}\hfill
        \begin{minipage}{0.3\textwidth}
            \centering
            \includegraphics[scale=0.55]{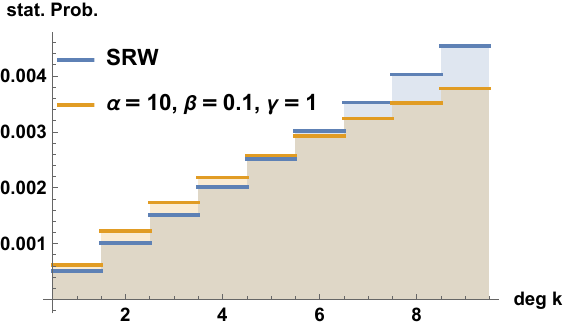}
        \end{minipage}\hfill
        \begin{minipage}{0.3\textwidth}
            \centering
            \includegraphics[scale=0.55]{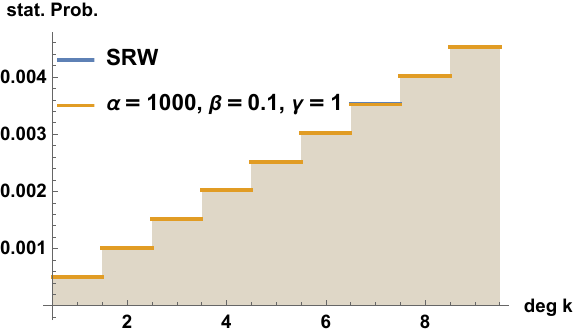}
        \end{minipage} 
        \begin{minipage}{0.3\textwidth}
            \centering
            \includegraphics[scale=0.55]{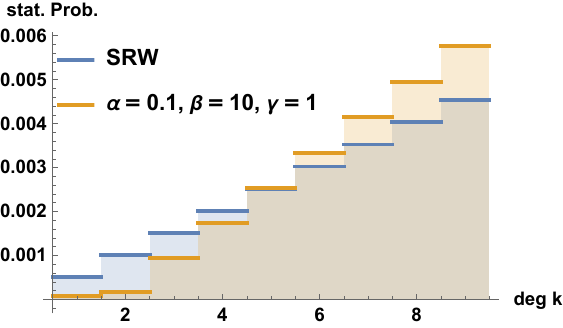}
        \end{minipage}\hfill
        \begin{minipage}{0.3\textwidth}
            \centering
            \includegraphics[scale=0.55]{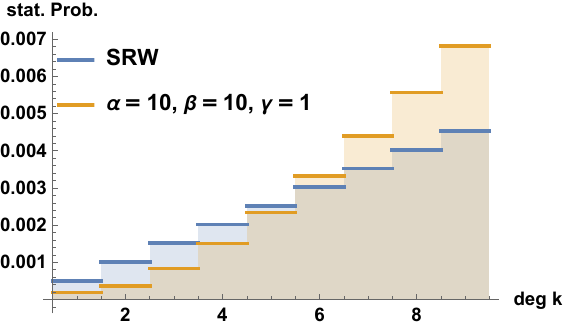}
        \end{minipage}\hfill
        \begin{minipage}{0.3\textwidth}
            \centering
            \includegraphics[scale=0.55]{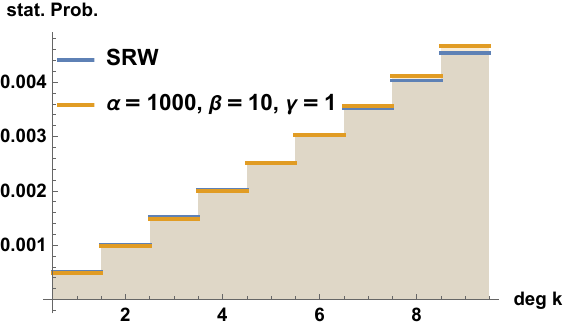}
        \end{minipage} 
        \caption{Stationary distribution of the node2vec random walk and the simple random walk for different values of $\alpha$ and $\beta$ on a large household model with Poissonian clique sizes.}
        \alt{Illustration of 6 bar plots that depict stationary distributions of node2vec random walks for different parameters and of the simple random walk for comparison.}
        \label{fig:plotscliquespoissondegreesbeta10}
    \end{figure}

The figures suggest that increasing the value of $\beta$ leads to more visits of high-degree nodes because the node2vec random walk remains inside the cliques for more time steps. The parameter $\alpha$ does not seem to have a clear effect. For large $\alpha$, the stationary distributions of the node2vec random walk seems to converge to the stationary distribution of the simple random walk. We explore this in more detail in the next section.

\subsection{Limit cases}

Although Theorem~\ref{thm:mainthm} is explicit, it does not give an intuitive result on the influence of the parameters. We therefore investigate the stationary distributions when the parameters $\alpha,\beta$ or $\gamma$ tend to zero or infinity. We specifically focus on clique communities where the sizes are Poisson distributed for specific limit cases. 

Let $S$ be the size of a clique in the household model, let $n$ be the number of total cliques and $n_{C_k}^G$ the number of $k$-cliques in the household model $G$. We assume that $S \sim Poi(\lambda)$. Then, by the law of large numbers,
\begin{align}
    n_{C_k}^G = \PP(S=k)\cdot n (1+o_{\PP}(1)).
\end{align} 
For $k=1$ we have
\begin{align}
    \E[\tau^{(C_1)}] = 1
\end{align}
for any choice of parameters, so for the rest of this section we assume that $k\ge 2$. 
Applying the limit $\alpha \to \infty$ to formula~\eqref{eq:evaluecliques} for the expected number of steps for $k$-cliques, we obtain
\begin{align}
    \E[\tau^{(C_k)}] = \frac{\alpha+(k-1)(\alpha+(k-2)\beta+2\gamma)}{\alpha+(k-1)\gamma} \xto[\alpha \to \infty]{} k.
\end{align}
Plugging this expectation into Theorem~\ref{thm:mainthm} yields
\begin{align}
  \pi(v) = \frac{l}{\sum_{k=1}^n k^2 n_{C_k}^G} = \frac{l}{n \E[S^2]}(1+o_{\PP}(1)) = \frac{l}{n(\lambda^2+\lambda)}(1+o_{\PP}(1))
\end{align}
for any node $v$ which is part of a $l$-clique. Since each clique of size $S_i$ contains $\binom{S_i}{2}$ edges inside and $\frac{S_i}{2}$ outside arms to other cliques, the total number of edges within the graph can be written as
\begin{align}
    |E|= \sum\limits_{i=1}^n \left(\binom{S_i}{2}+\frac{S_i}{2}\right) =\sum\limits_{i=1}^n \left(\frac{1}{2}(S_i(S_i-1))+\frac{S_i}{2}\right) = \frac{1}{2}\sum\limits_{i=1}^n S_i^2.
\end{align}
This implies that under the standard random walk, the stationary distribution of any node $v$ which is part of a $l$-clique can be written as
\begin{align}
    \pi^{SRW}(v) = \frac{d_v}{2 |E|} = \frac{l}{\sum_{i=1}^n S_i^2} = \frac{l}{n \E[S^2]}(1+o_{\PP}(1)) = \frac{l}{n(\lambda^2+\lambda)}(1+o_{\PP}(1)),
\end{align}
by using the law of large numbers. Thus, when $\alpha\to\infty$, the stationary probabilities of the node2vec random walk and the simple walk are asymptotically equal. 

To visualize the limit cases, we used the same household model as in Section~\ref{subsec:Clique household model with Poissonian clique sizes} (i.e. $n=100$, $S\sim Poi(4)$). Figure ~\ref{fig:limitcases}~(a) indeed confirms that the stationary distributions of the node2vec random walk for large $\alpha$ and the simple random walk are similar.

Applying $\gamma \to \infty$ to formula~\eqref{eq:evaluecliques}, we obtain
\begin{align}
    \E[\tau^{(C_k)}] = \frac{\alpha+(k-1)(\alpha+(k-2)\beta+2\gamma)}{\alpha+(k-1)\gamma} \xto[\gamma \to \infty]{} 2
\end{align}
which implies that for any node $v$,
\begin{align}
  \pi(v) = \frac{2}{\sum_{k=1}^n 2k n_{C_k}^G} = \frac{1}{n \E[S]}(1+o_{\PP}(1)) = \frac{1}{n\lambda}(1+o_{\PP}(1)),
\end{align}
which is the uniform distribution. The expectation of $2$ is reasonable because in this parameter regime, the node2vec random walk can only make one step inside the clique before leaving the clique.

Figure~\ref{fig:limitcases}~(b) shows that indeed, the distribution for $k\geq 2$ is uniform for the node2vec random walk for large $\gamma$.

Plugging $\alpha = 0, \gamma = 1$ into formula~\eqref{eq:evaluecliques} yields
\begin{align}
    \E[\tau^{(C_k)}] = (k-2)\beta + 2,
\end{align}
which implies that for any node $v$ which is part of a $l$-clique,
\begin{nalign}
    \pi(v) &= \frac{(l-2)\beta + 2}{\sum_{k=1}^n k n_{C_k}^G ((k-2)\beta + 2)} = \frac{(l-2)\beta + 2}{n\E[S((S-2)\beta+2]}(1+o_{\PP}(1)) \nonumber\\
    &= \frac{(l-2)\beta + 2}{\lambda n (\beta(\lambda - 1) +2)}(1+o_{\PP}(1)).
\end{nalign}
For this choice of parameters, the stationary distribution of the node2vec random walk behaves similar to the simple random walk when $\beta$ is close to $\gamma$, while for larger $\beta$, larger cliques are favored.

Figure~\ref{fig:limitcases}~(c) shows that indeed for a larger $\beta$ the stationary distribution of the node2vec random walk deviates more from the one of the simple random walk.

\begin{figure}[tbp]
\centering
    \begin{subfigure}[b]{0.3\textwidth}
    \centering
    \includegraphics[scale=0.55]{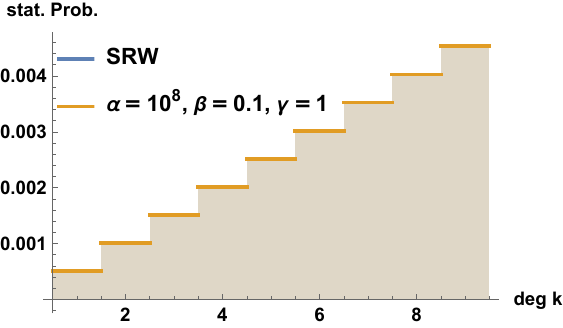}
    \caption{\label{fig:alphatoinfty}}
    \end{subfigure}
\hfill
    \begin{subfigure}[b]{0.3\textwidth}
    \centering
    \includegraphics[scale=0.55]{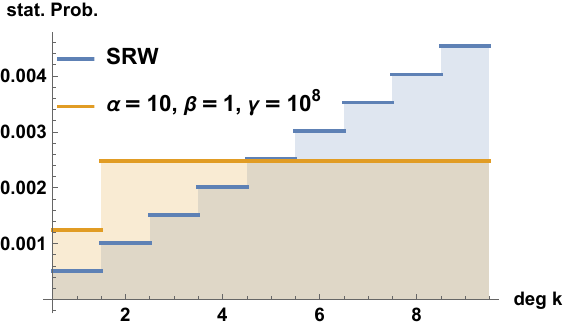}
    \caption{\label{fig:gammatoinfty}}
    \end{subfigure}
\hfill
    \begin{subfigure}[b]{0.3\textwidth}
    \centering
    \includegraphics[scale=0.55]{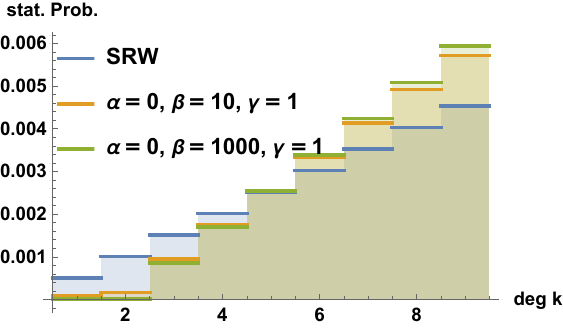}
    \caption{\label{fig:alpha0gamma1}}
    \end{subfigure}
\caption{Stationary distribution of the node2vec random walk and the simple random walk for the limit cases (a) $\alpha\to\infty$, (b) $\gamma\to\infty$ and (c) $\alpha = 0, \gamma = 1$  on a large household model with Poissonian clique sizes.}
\alt{Illustration of 3 bar plots that depict stationary distributions of node2vec random walks for specific limit cases and of the simple random walk for comparison.}
\label{fig:limitcases}
\end{figure}

\subsection{Cliques with an asymmetric number of arms}
The proof of Theorem~\ref{thm:mainthm} strongly relies on the automorphism of the community, and the fact that every community member has the same amount of edges to non-community members. We now show in a small-scale example what the effect is of dropping the latter assumption.

In particular, we prove an expression for the stationary distribution of a triangle with an arbitrary amount of outside arms (an edge which connects a node inside the triangle to a node outside of it) on each node of the triangle. In comparison, the construction of the household model in Definition~\ref{def:simplehousehold} only includes triangles where each node of the triangle has exactly one outside arm. Here we include asymmetric cases. We specifically consider graphs that contain one triangle $u,v,w$, and where each of the nodes $u,v,w$ is connected to an arbitrary number of degree-one nodes ($n,p,m$ respectively).
\begin{figure}[tbp]
    \centering
    \includegraphics[width=0.3\linewidth]{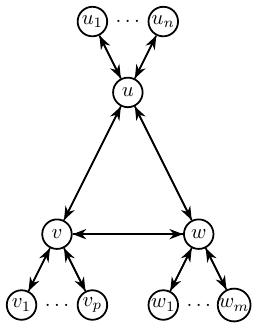}
    \caption{A $3$-clique with an asymmetric number of outside arms.}
    \alt{Illustration of a 3-clique with an arbitrary amount of outside arms on each node.}
    \label{fig:trianglewitharbarms}
\end{figure}

To be more precise, let $n,p,m \in \NN$ and let $G= (V,E)$ be
\begin{align}\label{eq:defmultiarmv}
V \coloneqq \{u,v,w,u_1,\ldots,u_n,v_1,\ldots,v_p,w_1,\ldots,w_m\}
\end{align}
and $E = E_1 \cup E_2 \cup E_3 \cup E_4$ with 
\begin{nalign}\label{eq:defmultiarme}
  E_1 &\coloneqq \{ (x,y)|x,y \in \{ u,v,w \}, x\neq y \}, \\
  E_2 &\coloneqq \{ (u,u_1),\ldots,(u,u_n),(u_1,u),\ldots,(u_n,u) \}, \\
  E_3 &\coloneqq \{ (v,v_1),\ldots,(v,v_p),(v_1,v),\ldots,(v_p,v) \} \text{ and } \\
  E_4 &\coloneqq \{ (w,w_1),\ldots,(w,w_m),(w_1,w),\ldots,(w_m,w) \}. 
\end{nalign}
Note that $E_1,E_2,E_3,E_4$ are pairwise disjoint. Figure~\ref{fig:trianglewitharbarms} visualizes this graph class.

\begin{theorem}
    Let $G = (V,E)$ be the graph defined in~\eqref{eq:defmultiarmv} and~\eqref{eq:defmultiarme}. Then, the stationary probabilities of a node2vec random walk traversing on the directed edges is
    \begin{align}\label{eq:sol_arbarms}
        \pi(e) =
        \begin{cases}
            \tfrac{1}{Z}(\alpha + \beta +n\gamma)(\alpha + \beta +p\gamma)(\alpha + \beta +m\gamma),  e \in E_1, \\
            \tfrac{1}{Z}(\alpha + (n+1)\gamma)(\alpha + \beta +p\gamma)(\alpha + \beta + m\gamma), e \in E_2, \\
            \tfrac{1}{Z}(\alpha + \beta +n\gamma)(\alpha + (p+1)\gamma)(\alpha + \beta +m\gamma), e \in E_3, \\
            \tfrac{1}{Z}(\alpha + \beta +n\gamma)(\alpha + \beta +p\gamma)(\alpha + (m+1)\gamma), e \in E_4,        
        \end{cases}
    \end{align}
    where $Z$ is the normalisation constant.
\end{theorem}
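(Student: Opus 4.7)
The plan is to exploit the fact that the node2vec walk on $\bar{E}$ is a Markov chain and verify that the proposed $\bar{\pi}$ in~\eqref{eq:sol_arbarms} satisfies the global balance equations $\bar{\pi}(f)=\sum_{e}\bar{\pi}(e)\,P(e\to f)$ for every directed edge $f$. Since $\alpha,\beta,\gamma>0$ and the underlying graph is connected with a triangle, the induced chain on $\bar{E}$ is irreducible and aperiodic, hence any probability vector solving these equations is automatically the unique stationary distribution.

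Before computing anything, I would exploit symmetry to cut the workload. The map permuting $(u,n)\leftrightarrow(v,p)\leftrightarrow(w,m)$ together with a relabelling of the corresponding leaves is a graph isomorphism, and the candidate formula is manifestly invariant under the induced permutation. Likewise leaves attached to a common triangle vertex are interchangeable. Consequently it suffices to verify balance at three representative directed edges: a triangle edge $(u,v)$, a triangle-to-leaf edge $(u,u_1)$, and the reverse leaf-to-triangle edge $(u_1,u)$; the remaining cases follow by the symmetry above.

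The substantive step is a direct computation. I would introduce shorthand $A\coloneqq \alpha+\beta+n\gamma$, $B\coloneqq \alpha+\beta+p\gamma$, $C\coloneqq \alpha+\beta+m\gamma$ for the normalising constants of transitions leaving a directed triangle edge (arriving at $u$, $v$, $w$ respectively), and $D_u\coloneqq \alpha+(n+1)\gamma$, $D_v$, $D_w$ for those of transitions leaving leaf-to-triangle edges. Incoming transitions to $(u,v)$ come in three flavours: backtracking from $(v,u)$ with probability $\alpha/A$; a triangle step from $(w,u)$ with probability $\beta/A$; and $n$ leaf-originating steps $(u_i,u)\to(u,v)$ each with probability $\gamma/D_u$. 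Substituting the candidate values $\bar{\pi}(v,u)=\bar{\pi}(w,u)=ABC/Z$ and $\bar{\pi}(u_i,u)=D_u BC/Z$, the right-hand side telescopes via $\alpha+\beta+n\gamma=A$ to the claimed $ABC/Z$. The equation for $(u_1,u)$ is trivial because $u_1$ has only one neighbour, so the sole incoming edge $(u,u_1)\to(u_1,u)$ has probability $1$, yielding $\bar{\pi}(u_1,u)=\bar{\pi}(u,u_1)$, in agreement with the formula. The equation for $(u,u_1)$ is handled by an analogous accounting combining $(v,u)$, $(w,u)$, the leaves $(u_j,u)$ for $j\neq 1$, and the backtracking contribution from $(u_1,u)$; it again collapses by virtue of $2\gamma+(n-1)\gamma+\alpha=D_u$.

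The main obstacle is bookkeeping rather than insight: correctly classifying each incoming transition as backtracking, triangle or non-neighbour type, and tracking which of the six normalising constants $A,B,C,D_u,D_v,D_w$ governs it. Once the three representative identities are verified, the permutation automorphism of $\{u,v,w\}$ delivers every remaining balance equation, and uniqueness of the stationary distribution on the irreducible aperiodic chain concludes the proof.
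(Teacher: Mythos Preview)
Your approach is correct and essentially the same as the paper's: both establish that the claimed vector satisfies the balance equations $\bar{\pi}=\bar{\pi}P$ on the directed-edge chain and invoke uniqueness of the stationary distribution. The only difference is presentational---the paper writes out the full nine-variable linear system and states that solving it yields the formula, whereas you exploit the $S_3$ symmetry in $(u,v,w)$ to reduce the check to three representative edges and carry out the substitution explicitly.
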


It is surprising that all directed edges inside the triangle have the same stationary distribution, although the number of outside arms is not necessarily equal.

\begin{proof}
    Let $P$ be the transition matrix of the node2vec random walk on the directed edges of $G$. Then, the stationary distribution has to fulfill $\pi = \pi P$ which is equivalent to
    \begin{align}\label{eq:stationaryequations}
        \pi(f) = \sum\limits_{e\in E: e(1)=f(0)} p(e,f)\pi(e) ~~~ \forall f \in \bar{E},
    \end{align}
    where $e(1)$ is the target node of the directed edge $e$ and $f(0)$ is the starting node of the directed edge $f$. Denote the stationary probabilities $\pi_1 = \pi((w,u)),\pi_2 = \pi((u,w)),\pi_3 = \pi((u,v)),\pi_4 = \pi((v,u)),\pi_5 = \pi((v,w)),\pi_6 = \pi((w,v)),\pi_x = \pi((u,u_1)),\pi_y = \pi((w,w_1)),\pi_z = \pi((v,v_1))$.
    Our goal is to solve the system of equations in~\eqref{eq:stationaryequations} to obtain the stationary distribution. By symmetry we know that $\pi((u,u_1)) = \pi((u,u_i))$ for all $i \in \{ 1,\ldots,n \}$ and since $d_{u_1} = 1$, it follows $\pi((u,u_1)) = \pi((u_1,u))$. This implies $\pi(e) = \pi(e')$ for all $e,e'\in E_2$, i.e. the stationary probability is the same for all directed edges in $E_2$. The same argument holds true for $E_3$ and $E_4$. Because of this, we denote the stationary probability for all directed edges in $E_2$ by $\pi_x$, for all in $E_3$ by $\pi_z$ and for all in $E_4$ by $\pi_y$. For directed edges in $E_1$ it is not immediately clear that they have the same stationary probability. We begin by posing the linear equations
    \begin{nalign}
        \pi_1 &= \sum\limits_{e\in E: e(1)=w} p(e,(w,u))\pi(e) \nonumber\\
        &= \pi_2 \frac{\alpha}{\alpha + \beta + m\gamma} + \pi_6 \frac{\beta}{\alpha + \beta + m\gamma} + m\pi_y \frac{\gamma}{\alpha + (m+1)\gamma}
    \end{nalign}
    and 
    \begin{nalign}
        \pi_x &= \sum\limits_{e\in E: e(1)=u} p(e,(u,u_1))\pi(e)\nonumber \\
        &= \pi_x \frac{\alpha}{\alpha + (n+1)\gamma} + \pi_1 \frac{\gamma}{\alpha + \beta + n\gamma} + \pi_4 \frac{\gamma}{\alpha + \beta + n\gamma} + (n-1)\pi_x \frac{\gamma}{\alpha + (n+1)\gamma}.
    \end{nalign}
    The other equations follow analogously. To obtain a probability distribution, we add the equation
    \begin{align}
        \pi_1 + \ldots + \pi_6 + 2n\pi_x +2m\pi_y +2p\pi_z = 1.
    \end{align}
    This means that we can omit one other equation, since $9$ equations are sufficient to solve for $9$ variables, so we omit the equation for z. We obtain the linear system
    $$\begin{pmatrix}[ccccccccc|c] 
    -1 & \frac{\alpha}{\alpha + \beta + m \gamma} & 0 & 0 & 0 & \frac{\beta}{\alpha + \beta + m \gamma} & 0 & \frac{m\gamma}{\alpha + (m+1) \gamma} & 0 & 0 \\ 
    \frac{\alpha}{\alpha + \beta + n \gamma} & -1 & 0 & \frac{\beta}{\alpha + \beta + n \gamma} & 0 & 0 & \frac{n\gamma}{\alpha + (n+1) \gamma} & 0 & 0 & 0 \\
    \frac{\beta}{\alpha + \beta + n \gamma} & 0 & -1 & \frac{\alpha}{\alpha + \beta + n \gamma} & 0 & 0 & \frac{n\gamma}{\alpha + (n+1) \gamma} & 0 & 0 & 0 \\
    0 & 0 & \frac{\alpha}{\alpha + \beta + p \gamma} & -1 & \frac{\beta}{\alpha + \beta + p \gamma} & 0 & 0 & \frac{p\gamma}{\alpha + (p+1) \gamma} & 0 & 0 \\
    0 & \frac{\beta}{\alpha + \beta + m \gamma} & 0 & 0 & -1 & \frac{\alpha}{\alpha + \beta + m \gamma} & 0 & \frac{m\gamma}{\alpha + (m+1) \gamma} & 0 & 0 \\
    0 & 0 & \frac{\beta}{\alpha + \beta + p \gamma} & 0 & \frac{\alpha}{\alpha + \beta + p \gamma} & -1 & 0 & \frac{p\gamma}{\alpha + (p+1) \gamma} & 0 & 0 \\
    \frac{\gamma}{\alpha + \beta + n \gamma} & 0 & 0 & \frac{\gamma}{\alpha + \beta + n \gamma} & 0 & 0 & \frac{-2\gamma}{\alpha + (n+1) \gamma} & 0 & 0 & 0 \\
    0 & \frac{\gamma}{\alpha + \beta + m \gamma} & 0 & 0 & 0 & \frac{\gamma}{\alpha + \beta + m \gamma} & 0 & \frac{-2\gamma}{\alpha + (m+1) \gamma} & 0 & 0 \\
    1 & 1 & 1 & 1 & 1 & 1 & 2n & 2m & 2p & 1 
    \end{pmatrix}$$
    which yields the solutions in~\eqref{eq:sol_arbarms} that fulfill $\pi = \pi P$.
\end{proof}

We believe that similar results should hold for bigger cliques with an arbitrary amount of arms at each node of the clique, only the number of equations that need to be solved will go up significantly.

\section{Proof of Theorem~\ref{thm:mainthm}}\label{sec:proofofmainthm}

Our goal is to obtain the stationary distribution of the node2vec random walk $X=(X_i)_{i\in \NN}$ on a household model $G$. The main difficulty of the proof is that the transition probabilities depend on the number of triangles containing the current and previous node of the random walk, so at each step of the random walk we have to know how many triangles the previously visited and current node are part of. To avoid this problem, we couple the node2vec random walk to a random walk $Y=(Y_i)_{i\in \NN}$ on $G'$ and use the symmetries of the household model. 

The coupling works in the following way. For $v\in V$ denote by $c(v)$ the community $v$ is contained in (it is unique). By construction of the household model, there is exactly one node $v'\in V'$ in the universe graph that corresponds to the community $c(v)$ in the household model. We denote this correspondence by $v'=c(v)$. We define $Y$ by
\begin{align}
    Y_i = c(X_i)
\end{align}
for all $i \in \NN$. This means that $Y$ is a random walk on $G'$ that moves if $X$ moves between communities on $G$ and waits a time step for each jump $X$ makes inside a community. If we change the time scale by ignoring the waiting time and only track the movement, we can calculate the stationary distribution of that walk.

To achieve this, we introduce a different time scale by erasing all the times at which $Y$ is not moving. We define the random walk $Y^* = (Y^*_j)_{j \in \NN}$ recursively via
\begin{nalign}\label{eq:defystar}
    Y^*_1 &\coloneqq Y_{k_1} = Y_1 \\
    Y^*_2 &\coloneqq Y_{k_2}, \text{ where } k_2 = \min\{ l|Y_l \neq Y_1, l > k_1 \} \\
    &\;\;\vdots  \\
    Y^*_j &\coloneqq Y_{k_j}, \text{ where } k_j = \min\{ l|Y_l \neq Y_{k_{j-1}}, l > k_{j-1} \}
\end{nalign}
for all $j \in \NN$, so that $Y^*$ tracks the movement between communities of $X$. 
 
Before deriving the stationary distribution of $Y^*$, we introduce rooted graph automorphisms and automorphic classes. These are inspired by rooted tree isomorphisms (see e.g. \cite{lindeberg_isomorphism_2024}) and will allow us to describe the transitions of the random walk $Y^*$.

\begin{definition}(Rooted graph automorphisms)
    Let $H = (V_H,E_H)$ be a graph with root $r$. A rooted graph automorphism is a bijection $\varphi_r:V_H\to V_H$ that fulfills
    \begin{itemize}
        \item $u,v$ are adjacent in $V_H$ $\Longleftrightarrow$ $\varphi_r(u),\varphi_r(v)$ are adjacent in $V_H$
        \item $\varphi_r(r) = r$.
    \end{itemize}
\end{definition}

Using this definition, we can define rooted automorphic classes which we will use to distinguish classes of nodes in the automorphic communities.

\begin{definition}(Rooted automorphic classes)
    Let $G$ be a household model with corresponding universe graph $G'$ and let $H = (V_H,E_H)$ be an automorphic community in $G$. For $v_1 \in V_H$ define its rooted automorphic class rooted in $v \in V_H$ by
    \begin{nalign}
        aut_{v}(H,v_1) \coloneqq \{v_2 \in V_H\setminus \{ v \}| ~\exists \text{ rooted graph automorphism } \\\varphi_v:V_H\to V_H \text{ such that } \varphi_v(v_1) = v_2 \}.
    \end{nalign}
\end{definition}
Informally, two nodes in the community belong to the same rooted automorphic class rooted in $v$ if they can be swapped while preserving the structure of the graph with respect to the root. 

To simplify notation, from now on we write $v\in H$ instead of $v\in V_H$ for a community $H$. 
Let $K = K_v^H$ be the number of unique rooted automorphic classes of $H$ rooted in $v$ and let $C^1_{v}(H),\ldots,C^K_{v}(H)$ be those unique classes.

\begin{figure}[tbp]
\centering
    \begin{subfigure}[b]{0.3\textwidth}
    \centering
    \includegraphics[scale=0.55]{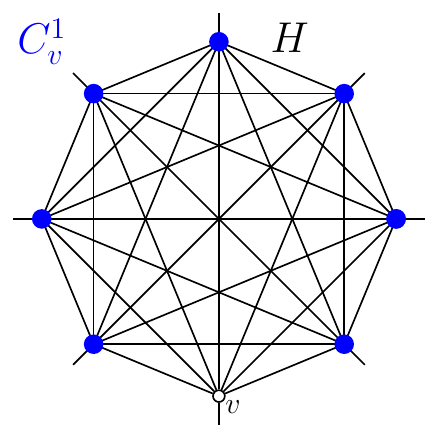}
    \caption{\label{fig:autclassesclique}}
    \end{subfigure}
\hfill
    \begin{subfigure}[b]{0.3\textwidth}
    \centering
    \includegraphics[scale=0.55]{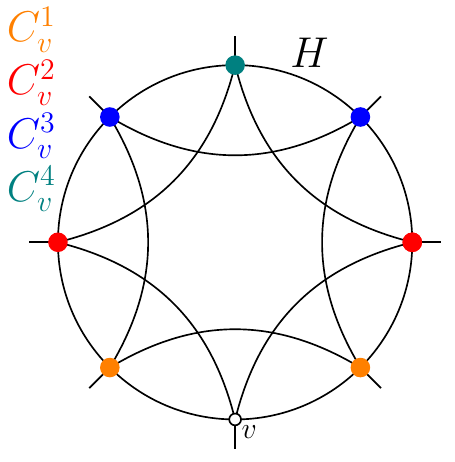}
    \caption{\label{fig:autclassesring}}
    \end{subfigure}
\hfill
    \begin{subfigure}[b]{0.3\textwidth}
    \centering
    \includegraphics[scale=0.55]{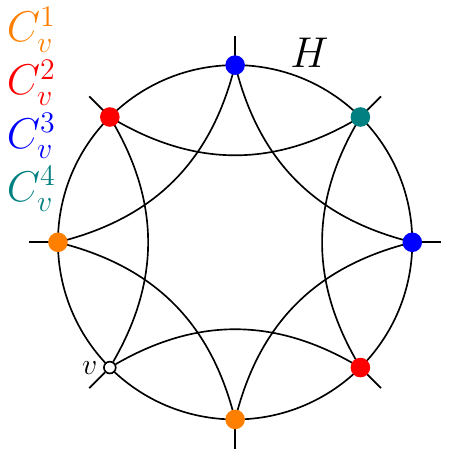}
    \caption{\label{fig:autclassesringrotated}}
    \end{subfigure}
\caption{Rooted automorphic classes of a community $H$ rooted in $v$ where $H$ is (a) an $8-$clique, (b) a ring community of size $8$ and (c) a ring community of size $8$ with a different root.}
\alt{Illustration of the rooted automorphic classes of a clique, a ring community and a ring community with a different root.}
\label{fig:autclasses}
\end{figure}

Now we can partition every community into different rooted automorphic classes given a root $v$.
As an example, Figure \ref{fig:autclasses} (a) shows the rooted automorphic classes of a clique. Since all nodes are automorphically equivalent given the root $v$, they belong to the same rooted automorphic class $C_v^1(H)$. Figure \ref{fig:autclasses} (b) shows the rooted automorphic classes of a different community, where four rooted automorphic classes exist. Although the nodes in $C_v^1(H)$ and $C_v^2(H)$ share a distance of $1$ to the root, they are not automorphically equivalent, leading to their separation into different classes. Figure \ref{fig:autclasses} (c) shows the same community as in (b) with a different root, resulting in different rooted automorphic class assignments compared to (b). Since the communities are automorphic, the root selection does not affect the number of rooted automorphic classes or the node count in each class, but only rotates the classes with respect to the root.

Let $\{ 
u',v' \} \in E'$ with corresponding communities $H_{u'}$ and $H_{v'}$ in $G$. Define the neighboring node of $H_{u'}$ in $H_{v'}$ as
    \begin{align}
        n_{v'}(u') \coloneqq \{ v\in H_{v'}| v \text{ is connected to a node in } H_{u'} \}.
    \end{align}
By the definition of the household model, this node is uniquely defined.
\begin{figure}[tbp]
    \centering
    \includegraphics[width=1\linewidth]{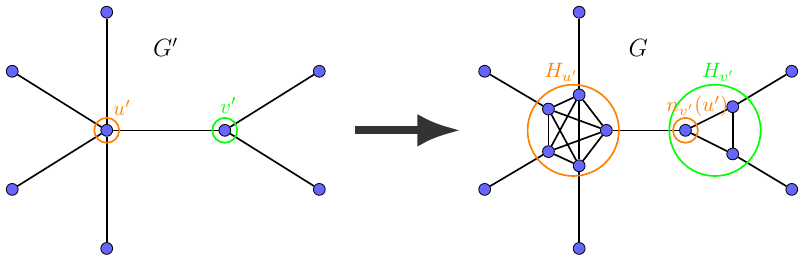}
    \caption{Example of a neighboring node $n_{v'}(u')$ in the household model transformation. $n_{v'}(u')$ is the unique node in $H_{v'}$ that is connected to a node in $H_{u'}$.}
    \label{fig:neighboringnode}
\end{figure}
Figure \ref{fig:neighboringnode} illustrates the definition of neighboring node.

To compute the stationary distribution of $Y^*$, we aim to express the transition probabilities of $Y^*$ in terms of the transition probabilities of the original node2vec random walk $X$. Given that $Y^*$ was previously at position $u'$ and is currently at position $v'$, we distinguish between backtracking—i.e., $Y_{i+1}^* = Y_{i-1}^*$ for some $i \in \mathbb{N}$—and transitions to other nodes, classified by their rooted automorphic class.

For $Y^*$ to backtrack, either $X$ also backtracks or $X$ first moves inside the community before leaving at the node where it entered the community. Denote by $u^*=n_{v'}(u')\in H_{v'}$ the node where $X$ entered $H_{v'}$. The probability to leave again at $u^*$ is given by
\begin{nalign}
    p_\alpha^{H_{v'}} \coloneqq \PP(X \text{ leaves } H_{v'} \text{ at } u^*).
\end{nalign}
We distinguish all other transitions of $Y^*$ by the rooted automorphic class of the exiting node rooted in $u^*$ as
\begin{align}
    p_{u^*,k}^{H_{v'}} &\coloneqq \PP(X \text{ leaves } H_{v'} \text{ at a node } x \in C^k_{u^*}(H_{v'})).
\end{align}
Since the probability to leave $H_{v'}$ is the same for all nodes in that rooted automorphic class, we do not need to distinguish at which node we leave, we only need to know to what rooted automorphic class it belongs and how many nodes are in each class. We denote the size of a rooted automorphic class by
\begin{align}
    a_{u^*,k}^{H_{v'}} \coloneqq |C^k_{u^*}(H_{v'})|
\end{align}
for $k=1,\ldots,K$. We obtain
\begin{nalign}
    P(Y^*_3 = w'|Y^*_2 = v',Y^*_1 = u') = 
        &~~\frac{p_\alpha^{H_{v'}}}{Z} \indic_{\{ w' = u' \}} +
        \frac{p_{u^*,1}^{H_{v'}}}{Z} \indic_{\{ n_{v'}(w') \in  C^1_{u^*}(H_{v'}) \}} \\
        &+\frac{p_{u^*,2}^{H_{v'}}}{Z} \indic_{\{ n_{v'}(w') \in  C^2_{u^*}(H_{v'}) \}} \\ &+
        \ldots +\frac{p_{u^*,K}^{H_{v'}}}{Z} \indic_{\{ n_{v'}(w') \in  C^K_{u^*}(H_{v'}) \}}
\end{nalign}
where $Z = p_\alpha^{H_{v'}} + a_{u^*,1}^{H_{v'}}p_{u^*,1}^{H_{v'}} + a_{u^*,2}^{H_{v'}}p_{u^*,2}^{H_{v'}} + \ldots + a_{u^*,K}^{H_{v'}}p_{u^*,K}^{H_{v'}}$. 
Similarly to node2vec, $Y^*$ is a Markov chain on the directed edges. 
The next theorem shows that the stationary distribution of $Y^*$ is uniform.

\begin{theorem}\label{thm:statdistrofystar}
    Let $Y^*$ be the random walk defined in \eqref{eq:defystar} on a universe graph $G'$ corresponding to the household model $G$. Then we have
    \begin{align}
        \pi^{Y^*}(v') = \frac{d_{v'}}{2|E'|}
    \end{align}
    for all $v' \in V'$.
\end{theorem}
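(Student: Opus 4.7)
The plan is to show that the uniform distribution $\bar{\pi}^{Y^*}((u',v')) = \tfrac{1}{2|E'|}$ on the directed edges $\bar{E'}$ of $G'$ is stationary for $Y^*$. Once established, projecting to the vertex set yields
\[
\pi^{Y^*}(v') = \sum_{u'\in N(v')} \bar{\pi}^{Y^*}((u',v')) = \frac{d_{v'}}{2|E'|}.
\]
Since $\alpha,\beta,\gamma>0$ and $G'$ is finite, connected and aperiodic, $Y^*$ is irreducible and aperiodic on $\bar{E'}$ and admits a unique stationary distribution. Verifying stationarity of the uniform distribution therefore reduces to checking that the transition matrix of $Y^*$ is doubly stochastic on $\bar{E'}$: for every $(v',w') \in \bar{E'}$,
\[
\sum_{u'\in N(v')} \PP\bigl(Y^*_3 = w' \,\big|\, Y^*_2 = v',\, Y^*_1 = u'\bigr) = 1.
\]

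To unpack this sum, the household construction provides a bijection $u' \mapsto n_{v'}(u')$ between $N(v')$ and the vertices of $H_{v'}$. Setting $u^* := n_{v'}(u')$ and $e^* := n_{v'}(w')$, the transition $(u',v') \to (v',w')$ under $Y^*$ occurs exactly when $X$, after entering $H_{v'}$ at $u^*$, first exits $H_{v'}$ via the outside arm attached to $e^*$. Writing $P^{H_{v'}}(u^*, e^*)$ for this first-exit probability, the above identity becomes
\[
\sum_{u^* \in H_{v'}} P^{H_{v'}}(u^*, e^*) = 1.
\]
For each fixed $u^*$, the first-exit distribution is a probability measure on $H_{v'}$, so $\sum_{e^*} P^{H_{v'}}(u^*, e^*) = 1$, i.e.\ the matrix $\bigl(P^{H_{v'}}(u^*,e^*)\bigr)_{u^*,e^*}$ is row-stochastic. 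It therefore suffices to show that all column sums coincide, for then the total mass $|H_{v'}|$ is split equally across $|H_{v'}|$ columns.

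The key observation is that $P^{H_{v'}}$ is invariant under graph automorphisms of $H_{v'}$: for every automorphism $\varphi$ of $H_{v'}$,
\[
P^{H_{v'}}(u^*, e^*) = P^{H_{v'}}(\varphi(u^*), \varphi(e^*)).
\]
This rests on the uniform boundary structure built into the household construction. Every node of $H_{v'}$ has exactly one outside arm, and no triangle crosses the boundary of $H_{v'}$ (for $w \in H_{v'}$, the outside neighbor $w^{\mathrm{out}}$ is not adjacent to any other vertex of $H_{v'}$). Consequently, along every step of the trajectory of $X$ up to its first exit, a transition to an outside node carries weight $\gamma$, while a transition within $H_{v'}$ carries a weight in $\{\alpha,\beta,\gamma\}$ determined solely by the internal edge structure of $H_{v'}$. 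Hence $\varphi$ induces a measure-preserving bijection between trajectories started at $u^*$ and those started at $\varphi(u^*)$. Since $H_{v'}$ is an automorphic community, for any two nodes $e_1^*, e_2^* \in H_{v'}$ there exists an automorphism $\varphi$ with $\varphi(e_1^*) = e_2^*$, and relabeling $u^* \mapsto \varphi^{-1}(u^*)$ in the sum yields $\sum_{u^*} P^{H_{v'}}(u^*, e_2^*) = \sum_{u^*} P^{H_{v'}}(u^*, e_1^*)$. Thus all column sums are equal, and each equals $1$. The main obstacle is justifying the automorphism invariance cleanly; it hinges on the household construction's one-outside-arm-per-node property, which prevents any $\beta$-weighted boundary triangle from breaking the symmetry of the first-exit dynamics.
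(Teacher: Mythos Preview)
Your proof is correct and follows the same overall strategy as the paper: show that the transition matrix of $Y^*$ on $\bar{E'}$ is doubly stochastic, conclude that the stationary distribution on directed edges is uniform, and project to vertices. The substantive difference is in how the column-sum identity is established. The paper introduces \emph{rooted automorphic classes} $C^1_{u^*}(H_{v'}),\ldots,C^K_{u^*}(H_{v'})$, groups the exit probabilities by class, and then argues that because $H_{v'}$ is vertex-transitive the class sizes $a^{H_{v'}}_{u^*,k}$ are independent of the root, so the sum over incoming edges $u'$ reproduces the same expression as the sum over outgoing edges $w'$. You bypass this combinatorial layer and work directly with the full first-exit kernel $P^{H_{v'}}(u^*,e^*)$, observing that it is invariant under $\mathrm{Aut}(H_{v'})$ and hence has constant column sums by vertex-transitivity. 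Your route is more elementary and arguably cleaner for this theorem; the paper's rooted-class decomposition is heavier but makes the structure of the transition probabilities of $Y^*$ more explicit, which could be useful if one wanted finer information than bistochasticity. Your justification that no boundary triangle exists (so every exit step carries weight $\gamma$, and the internal dynamics see only the intrinsic adjacency of $H_{v'}$) is exactly the right point and is what the paper's construction implicitly relies on as well.
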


\begin{proof}
    Denote $u^* = n_{v'}(u')$. For $\{ u',v' \} \in E'$ we have 
    \begin{align}
        \sum\limits_{w' \in V'} \indic_{\{ n_{v'}(w') \in  C^k_{u^*}(H_{v'}) \}} = |\{ w'\in V':  n_{v'}(w')\in  C^k_{u^*}(H_{v'}) \}| = |C^k_{u^*}(H_{v'})| = a_{u^*,k}^{H_{v'}}
    \end{align}
    for all $k=1,\ldots,K$.
    Using this and the fact that $P$ is a probability measure, for $u',v'\in V'$ we obtain
    \begin{nalign}\label{eq:pprobmeasure}
        1 = &~~\sum\limits_{w' \in V'} P(Y^*_3 = w'|Y^*_2 = v',Y^*_1 = u') \\
        = &~~\sum\limits_{w' \in V'}\frac{p_\alpha^{H_{v'}}}{Z} \indic_{\{ w' = u' \}} + \sum\limits_{w' \in V'} \frac{p_{u^*,1}^{H_{v'}}}{Z} \indic_{\{ n_{v'}(w') \in  C^1_{u^*}(H_{v'}) \}} \\
        &+\sum\limits_{w' \in V'}\frac{p_{u^*,2}^{H_{v'}}}{Z} \indic_{\{ n_{v'}(w') \in  C^2_{u^*}(H_{v'}) \}} \\ 
        &+\ldots +\sum\limits_{w' \in V'}\frac{p_{u^*,K}^{H_{v'}}}{Z} \indic_{\{ n_{v'}(w') \in  C^K_{u^*}(H_{v'}) \}} \\
        = &~~\frac{p_\alpha^{H_{v'}}}{Z}  + a_{u^*,1}^{H_{v'}}\frac{p_{u^*,1}^{H_{v'}}}{Z} +a_{u^*,2}^{H_{v'}}\frac{p_{u^*,2}^{H_{v'}}}{Z} +\ldots +a_{u^*,K}^{H_{v'}}\frac{p_{u^*,K}^{H_{v'}}}{Z} .
    \end{nalign}
    Since $H_{v'}$ is an automorphic community, the number and types of rooted automorphic classes remain the same for each root (see the example in Figure \ref{fig:autclasses} (b) and (c)), although the class labels may be permuted. Thus, for $\{ v',w'\} \in E'$,
    \begin{align}\label{eq:automorphicidentity}
       \sum\limits_{u' \in V'} \indic_{\{ n_{v'}(w') \in  C^k_{u^*}(H_{v'}) \}} = |\{ u'\in V':  n_{v'}(w')\in  C^k_{u^*}(H_{v'}) \}| =  a_{u^*,k}^{H_{v'}}
    \end{align}
    for all $k=1,\ldots,K$. Combining \eqref{eq:automorphicidentity} and \eqref{eq:pprobmeasure} yields
    \begin{nalign}
        &~~\sum\limits_{u' \in V'} P(Y^*_3 = w'|Y^*_2 = v',Y^*_1 = u') \\
        = &~~\sum\limits_{u' \in V'}\frac{p_\alpha^{H_{v'}}}{Z} \indic_{\{ w' = u' \}} + \sum\limits_{u' \in V'}
        \frac{p_{u^*,1}^{H_{v'}}}{Z} \indic_{\{ n_{v'}(w') \in  C^1_{u^*}(H_{v'}) \}} \\
        &+\sum\limits_{u' \in V'}\frac{p_{u^*,2}^{H_{v'}}}{Z} \indic_{\{ n_{v'}(w') \in  C^2_{u^*}(H_{v'}) \}} \\ &+
        \ldots +\sum\limits_{u' \in V'}\frac{p_{u^*,K}^{H_{v'}}}{Z} \indic_{\{ n_{v'}(w') \in  C^K_{u^*}(H_{v'}) \}}  \\
        = &~~\frac{p_\alpha^{H_{v'}}}{Z}  +
        a_{u^*,1}^{H_{v'}}\frac{p_{u^*,1}^{H_{v'}}}{Z} +a_{u^*,2}^{H_{v'}}\frac{p_{u^*,2}^{H_{v'}}}{Z} +
        \ldots +a_{u^*,K}^{H_{v'}}\frac{p_{u^*,K}^{H_{v'}}}{Z}  \\
        = &~~1.
    \end{nalign}
    This implies that the transition probability matrix of $Y^*$ (seen as a process on $\bar{E'}$) is bistochastic and as in the proof of Theorem 3.2. in \cite{meng_analysis_2020} has left eigenvector $(1,\ldots,1)$. With the uniqueness of the Perron-Frobenius eigenvector, this implies that
    \begin{align}
        \bar{\pi}^{Y^*}(e) = \frac{1}{|\bar{E}'|}.
    \end{align}
    With the projection \eqref{eq:projectionstatdistredgestonodes} we obtain
    \begin{align}
        \pi^{Y^*}(v') = \sum_{e\in\bar{E}': e(1)=v'} \bar{\pi}^{Y^*}(e) = \sum_{e\in\bar{E}': e(1)=v'} \frac{1}{|\bar{E}'|} = \frac{d_{v'}}{2|E'|}
    \end{align}
\end{proof}

By using Theorem \ref{thm:statdistrofystar} and by relating the different time scales to each other, we can obtain the stationary distribution of the node2vec random walk on $G$.
To compare the different time scales, we introduce some notation.
The time $X$ spends in node $v \in V$ in $t$ steps is denoted by
\begin{align}
    t_v = \sum\limits_{i=1}^t \indic_{\{ X_i = v\}},
\end{align}
the time $Y^*$ spends in the node $v' \in V'$ in $T$ steps by
\begin{align}
    T_{v'} = \sum\limits_{j=1}^T \indic_{\{ Y^*_j = v'\}},
\end{align}
and the time $X$ spends in the community $v' \in V$ in $t$ steps by
\begin{align}
    t_{v'} = \sum\limits_{i=1}^t \indic_{\{ X_i \in v'\}}.
\end{align}

For our main theorem we need a law of large numbers for a random number of independent random variables.

\begin{lemma}(Generalized LLN)[follows from Theorem 10.1 in \cite{revesz_laws_1968}]\label{lem:llnrandom}\\
    Let $\xi_1,\xi_2,\ldots$ be a sequence of i.i.d. random variables with $\E[\xi_1] < \infty$ and $\nu_1,\nu_2,\ldots$ be a sequence of positive integer valued random variables which fulfill $\nu_n \xto[n \to \infty]{a.s.}\infty$. Then,
    \begin{align}
        \frac{1}{\nu_n}\sum\limits_{k=1}^{\nu_n} \xi_k \xto[n \to \infty]{a.s.} \E[\xi_1].
    \end{align}
\end{lemma}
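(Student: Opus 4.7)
The plan is to deduce this from Kolmogorov's strong law of large numbers (SLLN) via a purely pathwise subsequence argument; the randomness of the indices $\nu_n$ adds no essential difficulty because everything takes place on an event of full probability, and in particular no independence between $(\nu_n)$ and $(\xi_k)$ is needed.

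Set $S_m \coloneqq \sum_{k=1}^m \xi_k$ and $\mu \coloneqq \E[\xi_1]$. Since $(\xi_k)_{k\in\NN}$ is i.i.d.\ with $\E\lvert\xi_1\rvert < \infty$, Kolmogorov's SLLN yields $S_m/m \xto[m\to\infty]{a.s.} \mu$. Let $A$ be the event on which $S_m(\omega)/m \to \mu$, and let $B \coloneqq \{\omega : \nu_n(\omega) \to \infty\}$; by hypothesis $\PP(A) = \PP(B) = 1$, so $\PP(A\cap B) = 1$.

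Now fix any $\omega \in A\cap B$. Then the deterministic real sequence $a_m \coloneqq S_m(\omega)/m$ converges to $\mu$, and $(\nu_n(\omega))_{n\in\NN}$ is a sequence of positive integers tending to infinity. Invoking the elementary real-analysis fact that if $a_m\to \mu$ and $m_n$ is any sequence of positive integers with $m_n\to\infty$ then $a_{m_n}\to\mu$ (this is immediate from the $\eps$-$M$ definition: given $\eps>0$, pick $M$ so that $\lvert a_m-\mu\rvert<\eps$ for $m\ge M$, and then $\nu_n(\omega)\ge M$ eventually), we obtain
\begin{align*}
    \frac{1}{\nu_n(\omega)}\sum_{k=1}^{\nu_n(\omega)}\xi_k(\omega) \;=\; a_{\nu_n(\omega)} \xto[n\to\infty]{} \mu.
\end{align*}
Since this holds for every $\omega$ in the full-measure event $A\cap B$, the claimed almost sure convergence $\frac{1}{\nu_n}\sum_{k=1}^{\nu_n}\xi_k \xto[n\to\infty]{a.s.} \E[\xi_1]$ follows.

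There is no genuine obstacle: the only content is the recognition that almost sure convergence of the deterministic averages $S_m/m$ lets one evaluate along any diverging integer-valued index sequence pointwise on the good event, thereby sidestepping the measure-theoretic issues (e.g.\ Wald-type identities or stopping-time machinery) that would appear if one tried to prove the analogous statement for convergence in $L^1$ or under weaker hypotheses on $\nu_n$. The main expository task is simply to state the pathwise reduction cleanly, as done above; no moment bounds beyond those already used in the SLLN are needed.
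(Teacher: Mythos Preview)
Your proof is correct. The paper does not actually supply its own argument for this lemma; it simply records the statement and cites Theorem~10.1 of R\'ev\'esz. Your route---reduce to Kolmogorov's SLLN and then pass to the random index by a pathwise subsequence argument on the intersection $A\cap B$ of two full-measure events---is the standard elementary derivation, and it is entirely self-contained. Compared to the paper's bare citation, your approach has the advantage of making explicit that no independence between $(\nu_n)$ and $(\xi_k)$ is needed and that nothing beyond integrability of $\xi_1$ is used, which is exactly what the later application in the proof of Theorem~\ref{thm:mainthm} requires.
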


We then have all tools to prove Theorem~\ref{thm:mainthm}.

\begin{proof}[Proof of Theorem~\ref{thm:mainthm}]
    Let $\bar{X_i}$ be the to $X_i$ corresponding Markov chain on the directed edges $\bar{E}$. Proposition 73 of \cite{serfozo_basics_2009} states that on $\bar{E}$ we have
    \begin{align}
        \lim\limits_{t\to\infty} \frac{1}{t}\sum\limits_{i=1}^t \indic_{\{\bar{X_i}=e\}} = \bar{\pi}(e)
    \end{align}
    where $\bar{\pi}(e)$ is the stationary distribution of $\bar{X_i}$ on $\bar{E}$. Together with the projection~\eqref{eq:projectionstatdistredgestonodes} between the stationary distribution on the directed edges and on the nodes, this implies
    \begin{align}
        \pi(v) = \sum\limits_{e:e(1) = v} \bar{\pi}(e) = \lim\limits_{t\to\infty} \frac{1}{t} \sum\limits_{e:e(1) = v}  \sum\limits_{i=1}^t \indic_{\{\bar{X_i}=e\}} = \lim\limits_{t\to\infty} \frac{t_v}{t}.
    \end{align}
    Using this, we begin by rewriting the above fraction as
    \begin{align}\label{eq:separationoftimes}
        \frac{t_v}{t} = \frac{T_{v'}}{T} \frac{T}{t} \frac{t_v}{t_{v'}} \frac{t_{v'}}{T_{v'}}.
    \end{align}
    In the following, we compute the limits of the four terms on the right-hand side of \eqref{eq:separationoftimes} to obtain an expression for $\pi(v)$. By Theorem \ref{thm:statdistrofystar},
    \begin{align}\label{eq:convergenceofYstar}
        \frac{T_{v'}}{T} \xto[T \to \infty]{\PP} \frac{d_{v'}}{2\lvert E'\rvert}.
    \end{align}
    Since by assumption all nodes in a community are automorphically equivalent, in the limit the time spent in a community gets equally divided between all nodes of that community. We obtain
    \begin{align}
        \frac{t_v}{t_{v'}} \xto[t \to \infty]{\PP} \frac{1}{d_{v'}}.
    \end{align}
    For the next term, we define $\tau_i$ as the time $X$ spends in the i-th visited community and $\tau_i^{(H)}$ as the time $X$ spends in the i-th visited community of type $H\in \cH_G$. If $Y^*$ jumps $T$ times, then $X$ jumps $\sum\limits_{i=1}^T \tau_i$ times. This leads to
    \begin{align}
        \frac{T}{t} = \frac{T}{\sum\limits_{i=1}^T \tau_i} = \frac{1}{\frac{1}{T}\sum\limits_{i=1}^T \tau_i}.
    \end{align}
    We define the amount of visited nodes which correspond to a community of type $H\in \cH_G$ during $T$ steps of $Y^*$ as
    \begin{align}
        T_H \coloneqq \sum\limits_{\substack{w'\in V': \\ type(w')=H}} T_{w'}.
    \end{align}
    As the number of nodes of $G$ is $|V|=n$, we can rearrange the sum and group the $\tau_i$ by their community type to obtain
    \begin{align}
        \frac{1}{T}\sum\limits_{i=1}^T \tau_i = \frac{1}{T}\sum\limits_{H\in \cH_G} \sum\limits_{j=1}^{T_H} \tau_{j}^{(H)} = \sum\limits_{H\in \cH_G} \frac{T_H}{T}\frac{1}{T_H}\sum\limits_{j=1}^{T_H} \tau_{j}^{(H)}.
    \end{align}
    For a fixed $H$, the times $\tau_{j}^{(H)}$ are identically distributed and independent. Additionally, since the underlying graph is connected and the node2vec random walk is recurrent for $\alpha, \beta, \gamma > 0$, they have finite expectation for all $j$. For fixed $H$ the times $T_H$ are positive integer valued random variables that fulfill $T_H \xto[T \to \infty]{a.s.} \infty$. This means we can use Lemma~\ref{lem:llnrandom} to obtain
    \begin{align}
        \frac{1}{T_H}\sum\limits_{j=1}^{T_H} \tau_{j}^{(H)} \xto[T \to \infty]{a.s.} \E[\tau_1^{(H)}]
    \end{align}
    for all $H\in \cH_G$. Additionally, using~\eqref{eq:convergenceofYstar} we have
    \begin{align}
        \frac{T_H}{T} = \sum\limits_{\substack{w'\in V': \\ type(w')=H}} \frac{T_{w'}}{T} \xto[T \to \infty]{\PP} \sum\limits_{\substack{w'\in V': \\ type(w')=H}} \frac{d_{w'}}{2|E'|} = \frac{|H| n_H^{G}}{2|E'|}
    \end{align}
    for all $k \in \{ 1,\ldots,n \}$. We can conclude that
    \begin{align}
        \frac{T}{t} \xto[T \to \infty]{\PP} \frac{2|E'|}{\sum\limits_{H \in \cH_G} |H| n_H^{G}\E[\tau_1^{(H)}]}.
    \end{align}
    Using Lemma~\ref{lem:llnrandom} again, we obtain
    \begin{align}
        \frac{t_{v'}}{T_{v'}} = \frac{\sum\limits_{i=1}^{T_{v'}} \tau_{i}^{(\Tilde{H})}}{T_{v'}} = \frac{1}{T_{v'}}\sum\limits_{i=1}^{T_{v'}} \tau_{i}^{(\Tilde{H})} \xto[T_{v'} \to \infty]{\PP} \E[\tau_1^{(\Tilde{H})}].
    \end{align}
    Since the random walks are recurrent, $T \to \infty$ implies $T_{v'} \to \infty$ and $T_k \to \infty$. By construction $T \to \infty$ also implies $t \to \infty$ and we let $T \to \infty$  on both sides of~\eqref{eq:separationoftimes} to obtain
    \begin{align}
        \pi(v) = \frac{d_{v'}}{2\lvert E' \rvert}\frac{1}{d_{v'}}\frac{2|E'|}{\sum\limits_{H \in \cH_G} |H| n_H^{G}\E[\tau_1^{(H)}]}\E[\tau_1^{(\Tilde{H})}] = \frac{\E[\tau_1^{(\Tilde{H})}]}{\sum\limits_{H \in \cH_G} |H| n_H^{G}\E[\tau_1^{(H)}]}.
    \end{align}
    Since all $\tau_i^{(H)}$ are i.i.d., we can use $\E[\tau_1^{(H)}] = \E[\tau^{(H)}]$ to get
    \begin{align}
        \pi(v) = \frac{\E[\tau^{(\Tilde{H})}]}{\sum\limits_{H \in \cH_G} |H| n_H^{G}\E[\tau^{(H)}]}.
    \end{align}
\end{proof}

\section{Conclusion and discussion}
In this paper, we have investigated the stationary distribution of the popular node2vec random walk on a specific class of graph models with a community structure. We have obtained exact results for the stationary distribution of the node2vec random walk for general walk parameters. 

We have demonstrated that the parameters governing non-backtracking behavior and the local and global exploration significantly influence the stationary distribution. Adjusting these parameters can shift the random walk from being uniformly distributed on nodes or edges to being biased towards larger communities. While node2vec random walks have been popular due to the tuneable parameters in practice, this mathematically demonstrates the power of these node2vec random walks to perform different types of random walks.

In this paper, we have only investigated the behavior of the stationary distribution in so-called household models, as these models allow to couple the node2vec random walk, which is heavily dependent on the triangle structure of the graph, to a known random walk on a `universe graph' which is free of the original triangle structure. We show that under slight modifications of this framework it is still possible to obtain the stationary distribution of the walk, but investigating the node2vec random walk on other types of graph models such as random regular graphs or stochastic block models would be an interesting point for further research. To achieve this, different methods would need to be employed, as the current transformation does not work in that setting.

Secondly, the stationary distribution is only one statistic of a random walk. Specifically, the node2vec random walks that are typically used in practice are rather short. Therefore, it would be interesting to quantify the behavior of other random walk statistics, such as mixing times or hitting times.

Finally, it would be interesting to investigate how the behavior of the node2vec random walk impacts algorithms that use it. In~\cite{barot_community_2021}, it was shown that adding the non-backtracking parameter of the node2vec random walk improves community detection. It would be interesting to see whether the local vs global exploration parameters have a similar effect, and we believe that our analysis of the stationary distribution provides the first step in performing such an analysis. 

\paragraph{Acknowledgments.}
The authors thank Gianmarco Bet and Luca Avena for fruitful discussions.

\printbibliography
\end{document}